\theoremstyle{plain}
\newtheorem{introquestion}{Question}
\newtheorem{introtheorem}{Theorem}
\newtheorem{theorem}{Theorem}[section]
\newtheorem{proposition}[theorem]{Proposition}
\newtheorem{lemma}[theorem]{Lemma}
\newtheorem*{proposition*}{Proposition}
\theoremstyle{definition}
\newtheorem{definition}[theorem]{Definition}
\newtheorem{example}[theorem]{Example}
\theoremstyle{remark}
\newtheorem{remark}[theorem]{Remark}
\newcommand{\thmref}[1]{Theorem~\ref{#1}}
\newcommand{\lemref}[1]{Lemma~\ref{#1}}
\newcommand{\exref}[1]{Example~\ref{#1}}
\newcommand{\queref}[1]{Question~\ref{#1}}
\def\B{{\mathcal B}}
\def\D{{\mathcal D}}
\def\ad{{\mathrm{ad}}}
\def\Q{{\mathbb Q}}
\def\C{{\mathbb C}}
\def\map{\mathrm{map}}
\def\Der{\mathrm{Der}}
\def\Hom{\mathrm{Hom}}
\def\cat0{\mathrm{cat}_0}
\def\dim{\mathrm{dim}}
\def\aut{\mathrm{aut}_1}
\def\w{\mathrm{wt}}
\def\B{B\mathrm{aut}_1}
\begin{document}

\title[Realizing Spaces as Classifying Spaces]
{Realizing Spaces as Classifying Spaces}

\author{Gregory  Lupton}

\address{Department of Mathematics,
           Cleveland State University,
           Cleveland OH 44115}

\email{G.Lupton@csuohio.edu}

\author{Samuel Bruce Smith}

\address{Department of Mathematics,
   Saint Joseph's University,
   Philadelphia, PA 19131}

\email{smith@sju.edu}


\keywords{Classifying Space for Fibrations, Rational Homotopy Type, Derivations, Minimal Model, Finite H-Space}

\subjclass[2010]{Primary: 55P62 55R15; Secondary: 55P10}

\thanks{This work was partially supported by a grant from the Simons Foundation (\#209575 to Gregory Lupton).  The research was also supported through the program ``Research in Pairs'' by the Mathematisches Forschungsinstitut Oberwolfach in 2014}

\begin{abstract}Which spaces occur as a classifying space for fibrations with a given fibre?  We address this question in the  context of rational homotopy theory.  We construct an infinite  family of  finite complexes  realized (up to rational homotopy) as classifying spaces.     We also give several non-realization results, including the following:  the rational homotopy types of $\C P^2$ and $S^4$ are not realized as the classifying space of any simply connected, rational  space with finite-dimensional homotopy groups. 
\end{abstract}

\maketitle

\section{Introduction}

The  classification  theory for   fibrations with fibres equivalent to  a fixed  CW complex $X$ was developed in     a series of  seminal papers  \cite{St, Dold,  May}.    The result of this work is the existence  of a {\em classifying space},  written     $B\mathrm{aut} (X)$.  The space $B\mathrm{aut} (X)$ is     the base of a universal fibration with fibre $X$ setting up a  one-to-one correspondence between  fibre-homotopy types of fibrations $X \to E \to B$   and homotopy classes of maps $h \colon B \to B \mathrm{aut} (X).$
The   exuberant notation  for the classifying space  is accounted for by its  provenance:  up to homotopy type, the  space may be  obtained by applying the  Dold-Lashof classifying space construction  \cite{D-L59} to the monoid $\mathrm{aut} (X)$  of  all homotopy self-equivalences of $X$ (see  \cite{Fuchs}).  
Restricting to the sub-monoid $\aut (X) := \map(X, X; 1)$   gives  the universal cover $B \aut (X)$,  the classifying space for fibrations   $X \to E \to B$ with $B$ simply connected.

The space  $\B(X)$ was among the first geometric objects described in rational homotopy theory. Sullivan   gave a model for  
this simply connected classifying space in terms of the derivations of a Sullivan minimal model \cite[Sec.11]{Su}.  Schlessinger and Stasheff \cite{SS} constructed a second, equivalent model in terms of derivations of a Quillen model.  The following  is a  long-standing, open question  in rational homotopy theory  (see \cite[p.519]{F-H-T01}):

\begin{introquestion}\label{ques: Baut}  
Which simply connected rational homotopy types occur as $B \aut (X)$?
\end{introquestion}

Question \ref{ques: Baut} is often interpreted as a conjecture to the effect that {\em all} rational homotopy types occur as classifying spaces.  However, such a suggestion is perhaps best viewed as an admission that, except in restricted cases,  little is known about the possible  rational homotopy types  that may occur.      The  affirmed cases of the famous Halperin Conjecture  \cite[p.516]{F-H-T01}  imply that $\B(X)$ is a product of even-dimensional Eilenberg-Mac Lane spaces for certain formal spaces $X$ (see \cite{Meier, Sm}). Gatsinzi \cite{Gat95, Gat96, Gat98, Gat01}  obtained a variety of results showing that  the L-S category of $B \aut (X)$ is  infinite for certain classes of spaces.      Yamaguchi \cite{Yam05} identified  the possible elliptic  spaces $X$ for which $B \aut (X)$ is of the rational homotopy type of a (rank-one) Eilenberg-Mac Lane space. 

The published  results on the classifying space taken together reveal a significant common feature.   With the exception of the odd-dimensional sphere  $$S^{2n+1} \simeq_\Q  B \aut (K(\Q, 2n)),$$ all  rational homotopy types known to occur correspond to infinite-dimensional CW complexes.   In this paper,   we give a new family of finite complexes  realized (up to rational homotopy type) as $B \aut (X)$.   We also prove that   the   rational homotopy types of some finite complexes  cannot be   realized when  $X$  is restricted to have finite-dimensional rational homotopy groups.

As an overriding hypothesis, we assume      all spaces  $X$ appearing in this paper are {\em rational} spaces. That is, all spaces  satisfy $X = X_\Q$.   We further assume spaces $X$ are    nilpotent, usually simply connected,  and of finite type.         
 We introduce one further hypothesis that will facilitates our analysis.  We say   a space $X$ is  \emph{$\pi$-finite} if $X$ has only finitely many non-zero (rational) homotopy groups.  
In this case, $\B (X)$ is  also a $\pi$-finite rational space  (see Proposition \ref{lem: pi-finite top degree}, below). 

It is easy to prove that   
$B \aut (K(\Q^m, n-1)) = K(\Q^m, n)$
for $n \geq 2$ and $m\geq 1$.  It is natural then to attempt to realize a product of Eilenberg Mac Lane spaces with nonzero homotopy groups in two distinct degrees.      In Section \ref{sec:realize},  we prove the following: \begin{introtheorem}\label{thm: intro rank 2}
The following rational homotopy types occur as $B \aut (X)$ for some simply connected, $\pi$-finite, rational space $X$:
\begin{enumerate}
\item  $S^{2n+1} \times S^{4n+1}$, for $n \geq 1$ and $n$ odd; 
\item  $K(\Q, r) \times K(\Q, r + 4m +1)$ for $r \geq 2$ and $m \geq 1$.
\end{enumerate}
We may also take $m=0$ in (2), if we allow $X$ to be nilpotent (not simply connected).
\end{introtheorem}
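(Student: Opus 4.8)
\emph{Setup.} Everything runs through Sullivan's derivation model for the classifying space. If $(\Lambda V,d)$ is the minimal model of $X$, then $\B(X)$ is computed from the derivation complex $\big(\Der(\Lambda V),[d,-]\big)$: one has $\pi_n(\B(X))\otimes\Q\cong H_{n-1}(\Der(\Lambda V),[d,-])$ for $n\ge 2$, and the rational homotopy Lie algebra of $\B(X)$ is $H_*(\Der(\Lambda V),[d,-])$ with commutator bracket. So the first step is a recognition statement: a nilpotent $\pi$-finite rational space whose rational homotopy is one-dimensional in each of two degrees $r<s$ and zero elsewhere must be $K(\Q,r)\times K(\Q,s)$, provided either $s\ne 2r-1$, or $r$ is odd (so the bottom generator has square zero in the minimal Sullivan model), or one separately checks that the self-bracket of the bottom class vanishes. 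For $S^{2n+1}\times S^{4n+1}\simeq_\Q K(\Q,2n+1)\times K(\Q,4n+1)$ the bottom degree $2n+1$ is odd and this is automatic; in part (2) it is automatic unless $r=4m+2$, where the vanishing must fall out of the explicit computation. With this in hand the theorem reduces, in each case, to producing a single $\pi$-finite $X$ whose finite-dimensional complex $\big(\Der(\Lambda V_X),[d,-]\big)$ has homology $\Q$ in exactly degrees $r-1$ and $s-1$.

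\emph{The models.} I would take $X$ elliptic of ``generalized projective plane'' type. The prototype is $\C P^2$, with model $(\Lambda(x_2,y_5),\,dy=x^3)$: the derivation complex is spanned by $x\mapsto 1$ and by $y\mapsto x^i$ for $i=0,1,2$, with $[d,-]$ sending $x\mapsto 1$ to a nonzero multiple of $y\mapsto x^2$, so the homology is $\Q$ in degrees $3$ and $5$ and $\B(\C P^2)\simeq_\Q K(\Q,4)\times K(\Q,6)$; replacing $x_2$ by $x_{2p}$ (keeping $dy=x^3$, so $|y|=6p-1$) gives $\B(X)\simeq_\Q K(\Q,4p)\times K(\Q,6p)$. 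The models required by the theorem are variants on two or three generators with a single, carefully chosen relation, designed so that (i) one ``constant'' derivation $v\mapsto 1$, with $v$ not occurring in $d$, is a cycle carrying the class we keep in degree $r-1$; (ii) exactly one ``structure'' derivation survives in degree $s-1$; and (iii) every remaining constant derivation $w\mapsto 1$ and every ``mixing'' derivation $w\mapsto(\text{monomial in the other generators})$ is cancelled by $[d,-]$ against another derivation. The two target degrees dictate the parities of the generators: in part (1) the survivors must sit in the even $\Der$-degrees $2n$ and $4n$ (and the hypothesis $n$ odd, together with $4n+1=2(2n+1)-1$, is what makes the sign bookkeeping in the cancellation work out), whereas in part (2) the odd gap $4m+1$ forces a model with generators of mixed parity and a correspondingly twisted relation. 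What remains is the routine but lengthy computation of $\big(\Der(\Lambda V_X),[d,-]\big)$ together with its commutator bracket.

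\emph{The case $m=0$.} Here the gap to be produced in $\Der$ is $1$, and no simply connected model has enough room; the role of nilpotence is to permit a generator $t$ of degree $1$ in the Sullivan model. Such a $t$ makes available relations of the form $d(\text{even generator})=t\cdot(\cdots)$ that do not exist in the simply connected range, and these supply precisely the ``off-by-one'' pairing in the derivation complex that yields the adjacent degrees $r$ and $r+1$. Concretely I would take $X$ a nilpotent two- or three-stage Postnikov space with $\pi_1\otimes\Q=\Q$ acting nontrivially on the higher groups, write down its nilpotent Sullivan model, and compute $\Der$ as before.

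\emph{Main obstacle.} Step (iii) is the crux. The derivation complex of any naive $X$ is far too large: one class $v\mapsto 1$ for every generator, a ``mixing'' derivation $v\mapsto(\text{monomial})$ for essentially every pair of generators in different degrees, and on top of that the genuinely structural derivations coming from $d$ — and all of these but two must be annihilated by the single operator $[d,-]$. Choosing $X$ (the number of generators, their exact degrees, and the precise relation) so that this near-total cancellation occurs, so that the two survivors land in the prescribed degrees, and so that the surviving bracket is trivial, is the whole of the construction; making the degree arithmetic cooperate is exactly what pins down the side conditions $n$ odd in (1), $r\ge2$ and $m\ge1$ in (2), and the need for nilpotence when $m=0$.
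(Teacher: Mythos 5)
Your overall framework (compute $H_*(\Der(\land V),[d,-])$ for an explicit $\pi$-finite model and then recognize the answer from its homotopy groups) is the paper's, but as written the proposal has two genuine gaps. The first and central one: you never exhibit the spaces $X$. Your own closing paragraph concedes that choosing the generators, their degrees, and the relation so that the near-total cancellation occurs ``is the whole of the construction,'' and that is exactly the content that is missing. For part (2) the paper takes $(\land(u_{2m+1},v_{2m+r},y_{4m+r}),\,dy=uv)$, which does fit your ``three generators, one relation'' template, but the specific degrees are what make the six derivation generators pair off to leave homology only in degrees $r-1$ and $4m+r$; without them there is nothing to check. For part (1) your template is too small: realizing $K(\Q,2n+1)\times K(\Q,4n+1)$ is not an instance of the three-generator family (that would need $4m=2n$ with $n$ odd), and the paper's model has six generators, three relations, and a page-long bookkeeping of the derivation complex into acyclic subcomplexes. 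So ``routine but lengthy computation'' is doing all the work, and for (1) it is not even clear your proposed class of models contains a witness.

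The second gap is in your recognition lemma. For a $\pi$-finite rational space with homotopy $\Q$ in degrees $r<s$ and $r$ even, the type fails to be determined by its homotopy groups whenever $s=(k+1)r-1$ for \emph{some} $k\ge 1$ (the competitor being the truncated polynomial space with $H^*=\land(z_r)/\langle z_r^{k+1}\rangle$), not only when $s=2r-1$. Concretely, for $r=2$, $m=1$ your criterion declares $K(\Q,2)\times K(\Q,7)$ automatic since $7\ne 2r-1$, yet $\C P^3_{\Q}$ has the same rational homotopy groups. Moreover, for $k\ge 2$ the attaching datum is a higher-order Whitehead product, so ``check that the self-bracket of the bottom class vanishes'' does not suffice even in principle. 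The paper instead rules out all the truncated competitors at once by producing an essential map $K(\Q,r)\to \B(X)$: it builds a nontrivial KS-extension over $\land(z_r)$ and detects nontriviality of the resulting fibration via Gottlieb groups (Proposition~\ref{lem:non-trivial}), then observes that a truncated polynomial space admits no essential map from $K(\Q,r)$. Some argument of this kind is needed in your write-up for every $r,m$ with $r$ even and $r\mid 4m+2$, and again for the $m=0$ nilpotent case.
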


In Section \ref{sec: non-realization}, we prove the following non-realization result:  \begin{introtheorem}\label{thm: no CP or S4}
The rational homotopy types of $\C P^2$ and $S^4$ are not realized as the classifying space of any simply connected, $\pi$-finite,  rational  space.
\end{introtheorem}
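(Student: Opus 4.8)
The plan is to work with Sullivan's derivation model of the classifying space. If $X$ has minimal Sullivan model $(\Lambda V, d)$, then $\pi_n(\B(X))\otimes\Q \cong H_{n-1}\bigl(\Der(\Lambda V, d)\bigr)$ for $n\geq 2$, where $\Der(\Lambda V)$ is the complex of derivations of $\Lambda V$ with differential $[d,-]$, and the whole rational homotopy Lie algebra of $\B(X)$ is carried by $H_*(\Der(\Lambda V))$ with bracket induced by the commutator of derivations. Suppose for contradiction that $\B(X)\simeq_\Q \C P^2$ (respectively, $\B(X)\simeq_\Q S^4$) for some simply connected, $\pi$-finite, rational space $X$, with minimal model $(\Lambda V,d)$. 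Since $\pi_*(\C P^2)\otimes\Q$ (resp. $\pi_*(S^4)\otimes\Q$) has top degree $5$ (resp. $7$), \propref{lem: pi-finite top degree} forces $V$ to have top degree $N=4$ (resp. $N=6$); and since $\pi_5(\C P^2)\otimes\Q$ (resp. $\pi_7(S^4)\otimes\Q$) is one-dimensional — matching $H_N(\Der(\Lambda V))\cong (V^N)^*$ — the space $X$ has a single generator in its top degree. Thus $V$ is concentrated in degrees $2,3,4$ (resp. $2,3,4,5,6$) with $\dim V^N=1$, so $X$ ranges over an explicit family of small rational spaces.

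I would then transfer the remaining low-degree homotopy constraints onto $(\Lambda V,d)$. For $\C P^2$ one needs $H_2(\Der(\Lambda V))=H_3(\Der(\Lambda V))=0$ and $H_1(\Der(\Lambda V))=\Q$ (reflecting $\pi_3=\pi_4=0$, $\pi_2=\Q$ for $\C P^2$); for $S^4$ one needs $H_1=H_2=H_4=H_5=0$ and $H_3(\Der(\Lambda V))=\Q$. Writing $\Der(\Lambda V)$ out degree by degree in terms of the generators of $V$ and the structure constants of $d$ converts these into explicit linear-algebra conditions: in the $\C P^2$ case, for instance, one first rules out bottom degree $3$ or $4$ for $V$ (there $d=0$ and $\B(X)$ is seen directly to be an Eilenberg--Mac Lane space or to have $\pi_4\ne 0$), so the bottom degree is $2$; then the unique top generator $v$ in degree $4$ must have $dv\in V^2\cdot V^3$ defining an injective pairing in the $V^3$ variable, with $\dim V^3\leq\dim V^2$ and several rank equalities among the maps $[d,-]$. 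One is left with finitely many small candidate models, and I would compute $H_*(\Der(\Lambda V))$ for each. My sample computations strongly suggest that every surviving candidate already fails at the level of Betti numbers — in the $\C P^2$ candidates the only degree-$1$ derivation cycle, $[v\mapsto u]$, is forced to be the boundary of $[w\mapsto 1]$, so that $H_1(\Der(\Lambda V))=0\ne\Q$ — and for any candidate surviving that test the final step is to compute the bracket on $H_*(\Der(\Lambda V))$: the relation $\delta y=x^3$ defining $\C P^2$ is a higher ($L_\infty$) operation, so one must check whether the ternary (Whitehead/Massey-type) self-bracket of the degree-$1$ class is nonzero, whereas for $S^4$ the relation $\delta b=a^2$ is the ordinary Whitehead square of the degree-$3$ class, which must be shown to vanish.

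The step I expect to be the main obstacle is keeping this case analysis under control, especially for $S^4$: there the generators of $V$ may sit in any of the degrees $2,\dots,6$ (the self-bracket computation already forces generators in degrees $3$ and $6$), giving many more configurations of $(\Lambda V,d)$ to eliminate than for $\C P^2$, and in each one must match not merely the Betti numbers of $\C P^2$ or $S^4$ but the full rational homotopy type, so no purely numerical count closes the argument. A more conceptual route, which I would search for, is a structural lemma valid for all simply connected $\pi$-finite $X$ — to the effect that a non-bounding degree-$1$ derivation cycle (the $\C P^2$ obstruction), or a degree-$3$ derivation cycle with nonzero self-bracket (the $S^4$ obstruction), forces either an extra rational homotopy group of $\B(X)$ or a non-trivial odd-degree rational cohomology class of $\B(X)$, contradicting the fact that $\C P^2$ and $S^4$ are elliptic spaces with vanishing odd cohomology. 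Establishing such a lemma, thereby bypassing the enumeration, is where I would concentrate my effort.
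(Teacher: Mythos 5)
Your setup is the same as the paper's: the derivation model, \propref{lem: pi-finite top degree} to force $V$ to be concentrated in degrees $2,3,4$ (resp.\ up to $6$) with a one-dimensional top, and (for $S^4$) the observation that $y^*$ must be a self-bracket, forcing $V^3\neq 0$. But the core of your plan---reduce to ``finitely many small candidate models'' and kill each by computing Betti numbers of $H_*(\Der(\land V))$---has a genuine gap, in fact two. First, the family is not finite: nothing bounds $\dim V^2$ and $\dim V^3$ (the paper's analysis only pins down $\dim V^2=\dim V^3=r$ with $r\geq 2$ and $dy=\sum u_iv_i$, with $r$ arbitrary and the differential $d\colon V^3\to\land^2V^2$ unconstrained), so the enumeration does not terminate. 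Second, the Betti-number test cannot close the argument even where it applies: a space with $\pi_*=\Q$ in degrees $2$ and $5$ only is either $\C P^2$ or $K(\Q,2)\times K(\Q,5)$, and you would be left having to compute a higher ($L_\infty$/Massey-type) operation on $H_*(\Der(\land V))$ to tell them apart --- you flag this yourself, but offer no method, and the analogous Whitehead-square computation for $S^4$ versus $K(\Q,4)\times K(\Q,7)$ is equally unaddressed.

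The missing idea is exactly the ``structural lemma'' you say you would search for, and the paper supplies it as \propref{lem:non-trivial}: given a suitable KS-extension $(\land(w_n),0)\to(\land(w_n)\otimes\land V,\D)\to(\land V,d)$ whose total space is minimal and in which $w_n^{\,*}$ is forced out of the Gottlieb group, the classifying map $K(\Q,n)\to\B(X)$ is essential. In the $\C P^2$ case one perturbs $\D(u_1)=zv_2+d(u_1)$, $\D(u_2)=zv_1+d(u_2)$ over $K(\Q,2)$; the resulting essential map must be injective on $\pi_2$, and the F\'elix--Halperin mapping theorem then forces $\cat_0(\B(X))=\infty$ (indeed infinite cup-length), contradicting finiteness of $\C P^2$ --- no case-by-case Betti computation and no higher-operation analysis is needed. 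The $S^4$ proof runs the same device over $K(\Q,2)$ or $K(\Q,4)$ in the various subcases indexed by $\dim V^2$, plus one direct computation showing a residual subcase yields $K(\Q,2)\times K(\Q,4)\times K(\Q,7)$. Without this mechanism (or an equivalent one), your proposal does not reach a proof.
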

Theorem \ref{thm: no CP or S4} strikes a warning note, as regards \queref{ques: Baut}: It implies that to realize these simple rational types will require quite complicated spaces $X$, i.e., spaces with infinitely many non-zero homotopy groups. 
Also in Section \ref{sec: non-realization}, we  deduce that any  simply connected space  of dimension five that does not satisfy a certain structural condition---of which there are many examples---cannot be realized as the classifying space of any simply connected, $\pi$-finite space.  
Whereas all  results up to this point  are obtained by analysis of Sullivan's model for the classifying space $B \aut (X)$, we include one further result  using the Schlessinger-Stasheff model.

\section{Preliminaries in Rational Homotopy Theory} 
\label{sec:prelim}
In this section, we   establish notation  in rational homotopy theory and record some   facts we will use about  the classifying space $\B (X)$.  We then give   two examples, one  of a realization and the other  a non-realization result. We emphasize   again our overriding hypotheses that spaces $X$ introduced are assumed  to be {\em rational}.   
This assumption  allows  for a concise statement of results avoiding the various hypotheses required to rationalize classifying spaces.

A     nilpotent space $X$ of finite type admits  a  \emph{Sullivan minimal model} $  (\land(V), d)$, which is a differential graded (DG) algebra freely generated by a  connected rational vector space  $V$  of finite dimension in each degree.   The differential  $d$ satisfies the {\em minimality condition} $d(V) \subseteq \land ^{+} V \cdot \land^{+} V$.  More generally, a  fibration $X \to E \to B$ of nilpotent spaces with $B$ simply connected  corresponds to a Koszul-Sullivan extension ({\em KS-extension}).  This is  a  sequence of DG algebras $$(\land W, \delta)  \to  (\land W \otimes \land V, \D) \to (\land V, d),$$ in which
 $(\land W, \delta)$ and $(\land V,  d)$ are the minimal models for $B$ and $X$, respectively.  Furthermore, the DG algebra  $(\land W \otimes \land V, \D)$ is a model for $E$ but need not be minimal; the differential here satisfies   $\D(w) = \delta(w)$ for $w \in W$ while $\D(v) - d(v) \in \land^{+}W \cdot (\land W \otimes \land V)$ 
 for $v \in V.$   Our   references for rational homotopy theory  are \cite{Tanre, F-H-T01}. 
 
Sullivan's model for the classifying space $\B (X)$   is constructed in terms of derivations of the minimal model $(\land V, d)$ for $X$   \cite[Sec.11]{Su}.    Let   $(\Der( \land V), D)$ denote   the graded Lie algebra of negative-degree derivations of $\land V$.  That is,  $\theta \in \Der^n (\land V)$ reduces degrees by $n$ and satisfies the derivation law $\theta(\chi_1\chi_2) = \theta(\chi_1)\chi_2 + (-1)^{n|\chi_1|} \chi_1\theta(\chi_2)$ for $\chi_1, \chi_2 \in \land V.$   The bracket of two derivations is   $[\theta_1, \theta_2] = \theta_1 \circ \theta_2 - (-1)^{|\theta_1||\theta_2|}\theta_2 \circ \theta_1$   and the  differential $D$ is given by   $D(\theta) = [d,\theta]$ for $\theta \in \Der (\land V).$  The DG Lie algebra $(\Der(\land V), D)$ gives rise to a Quillen model for $\B(X)$ (see \cite[Ch.6]{Tanre} and \cite{Gat95}).    We  will only  need the following special case of this  result here:

\begin{theorem}  Let $X$ be nilpotent   space of finite type with Sullivan minimal model $(\land V, d)$. There is an  isomorphism  of graded Lie algebras
$$   \pi_*(\Omega B \aut (X))  \cong H_*(\Der(\land V)) $$  in positive degrees where the left-hand graded space  has the Samelson bracket.   
\end{theorem}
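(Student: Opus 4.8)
The plan is to derive the statement from the fact, recalled above, that $(\Der(\land V), D)$ together with its commutator bracket is a DG Lie algebra model for $\B(X)$, combined with the standard correspondence between DG Lie models and the rational homotopy Lie algebras of loop spaces. First I would recall that the Dold--Lashof construction presents $\B(X)$ in such a way that $\aut(X) = \map(X,X;1)$, a grouplike monoid, is equivalent by a loop map to $\Omega \B(X)$; hence there is an isomorphism of graded Lie algebras $\pi_*(\Omega \B(X)) \cong \pi_*(\aut(X))$ with the Samelson bracket on both sides. Since $X$ is rational, both $\B(X)$ and $\aut(X)$ are rational spaces, so all the homotopy groups in question are $\Q$-vector spaces and the models below compute them directly, with no intervening rationalization.

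Next I would invoke the derivation model for this mapping-space component: $(\Der(\land V), D)$ is a DG Lie algebra model for $\aut(X)$, a fact going back to \cite[Sec.11]{Su} and developed in \cite[Ch.6]{Tanre} and \cite{Gat95} (compare \cite{SS} for the Quillen-model analogue). Passing to homology already gives an isomorphism of graded vector spaces $\pi_n(\aut(X)) \cong H_n(\Der(\land V))$ for $n \geq 1$. The restriction to positive degrees is genuine rather than an artifact of the argument: $H_0(\Der(\land V))$, which records the outer derivations of $(\land V, d)$, need not vanish, whereas $\Omega \B(X)$ is connected.

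What remains is to promote this additive isomorphism to one of graded Lie algebras, and this is the step I expect to be the main obstacle: one must check that the Samelson bracket induced by the commutator map $\aut(X) \times \aut(X) \to \aut(X)$ of the topological monoid corresponds to the derivation bracket $[\theta_1, \theta_2] = \theta_1 \circ \theta_2 - (-1)^{|\theta_1||\theta_2|}\theta_2 \circ \theta_1$ on $H_*(\Der(\land V))$. I would argue this by passing to the exponential (Malcev) picture: the positively graded DG Lie algebra $\Der(\land V)$ integrates, via the simplicial exponential functor, to a simplicial group modelling $\aut(X)$ compatibly with the monoid structures, so that group commutators correspond to Lie brackets and the induced bracket on $\pi_* \cong H_*(\Der(\land V))$ is precisely the derivation bracket. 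Equivalently, one compares $(\Der(\land V), D)$ with a Quillen minimal model $\L$ of $\B(X)$ through a zig-zag of DG Lie quasi-isomorphisms and applies Quillen's theorem identifying $H_*(\L)$ with $\pi_*(\Omega \B(X))$ as graded Lie algebras. In either route the point demanding care is that the comparison respects brackets and not merely differentials; the additive part of the statement is, by contrast, essentially bookkeeping with the derivation complex.
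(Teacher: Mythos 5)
Your proposal is correct in outline and takes essentially the same approach as the paper, whose proof consists of invoking the identity $\Omega \B(X) = \aut(X)$ and citing the direct argument of \cite[Th.1]{FLS} identifying $\pi_*(\aut(X))$, with its Samelson bracket, with $H_*(\Der(\land V))$ and its commutator bracket, together with the remark that this argument uses only the existence of a Sullivan minimal model and hence covers nilpotent $X$. Your sketch---the monoid/loop-space identification $\aut(X)\simeq\Omega\B(X)$, the derivation model, and then the bracket comparison (which is exactly the content of the cited theorem, and which you rightly flag as the substantive step)---is the same strategy; the only point you leave implicit is that the ingredients remain valid in the nilpotent, not merely simply connected, setting.
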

\begin{proof}
A direct proof for $X$ simply connected using the identity $\Omega \B (X) = \aut (X)$ is given  \cite[Th.1]{FLS}. 
The  argument given there requires only the existence of a Sullivan minimal model  for $X$ and so may be extended to the case $X$ is nilpotent.
\end{proof}

 \begin{proposition}\label{lem: pi-finite top degree}
Suppose $X$ is nilpotent and $\pi$-finite with 
$$\pi_i(X)  = \begin{cases} \Q^r\  \mathrm{some}\  r \geq 1 & i = N \\ 0 & i >  N.  \end{cases}$$
Then we have
$$\pi_i(B \aut X)  = \begin{cases} \Q^r & i = N+1 \\ 0 & i >  N+1.  \end{cases}$$
\end{proposition}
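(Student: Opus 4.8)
The plan is to use the isomorphism $\pi_*(\Omega \B(X)) \cong H_*(\Der(\land V))$ from the theorem above to transfer everything to the DG Lie algebra $(\Der(\land V), D)$ of the Sullivan minimal model $(\land V, d)$ of $X$. Since $X$ is $\pi$-finite with $\pi_{>N}(X) = 0$ and $\pi_N(X) = \Q^r$, the space $V$ is finite-dimensional, concentrated in degrees $\le N$, with $\dim V^N = r$ and $V^N$ exactly the top-degree generators. Writing $\Der^n(\land V)$ for the derivations lowering degree by $n$, the differential $D = [d,-]$ lowers $n$ by one, so $H_n(\Der(\land V)) \cong \pi_{n+1}(\B(X))$ for $n \ge 1$; thus it suffices to show $H_n(\Der(\land V)) = 0$ for $n > N$ and $H_N(\Der(\land V)) \cong \Q^r$.

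First I would dispatch the vanishing. A derivation of degree $n$ is determined by its restriction $V \to \land V$, a map lowering degree by $n$; since $V$ sits in degrees $\le N$ and $\land V$ is non-negatively graded, $\Der^n(\land V) = 0$ as soon as $n > N$. Hence the chain complex is zero above degree $N$, so $H_n(\Der(\land V)) = 0$ for $n > N$, giving $\pi_i(\B(X)) = 0$ for $i > N+1$. The same bookkeeping identifies $\Der^N(\land V) \cong \Hom(V^N, (\land V)^0) = \Hom(V^N, \Q) \cong \Q^r$: a degree-$N$ derivation must annihilate every generator of degree $< N$ and send the degree-$N$ generators to constants.

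It then remains to compute $H_N(\Der(\land V))$. Because $\Der^{N+1}(\land V) = 0$ there are no boundaries, so $H_N(\Der(\land V)) = \ker\bigl(D \colon \Der^N(\land V) \to \Der^{N-1}(\land V)\bigr)$, and the claim reduces to showing that \emph{every} top-degree derivation is a $D$-cycle. For $\theta \in \Der^N(\land V)$ the value of $\theta$ on any generator is a constant, so $d\circ\theta$ vanishes on $V$ and $D\theta = [d,\theta]$ agrees up to sign with $\theta\circ d$ on generators; moreover, for $v\in V^k$ the element $\theta(dv)$ lies in degree $k+1-N \le 1$ of $\land V$. When $X$ is simply connected this forces $\theta(dv)=0$ (the degree is negative unless $k=N-1$ or $k=N$; a short monomial-degree count shows $\theta$ kills $(\land^{\ge 2}V)^N$, using the minimality condition $d(V)\subseteq \land^{\ge 2}V$, and $(\land V)^1 = 0$ since $V^1=0$), so $D\theta = 0$. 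Thus $H_N(\Der(\land V)) = \Der^N(\land V)\cong \Q^r$, and $\pi_{N+1}(\B(X)) \cong \Q^r$.

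I expect the last step---that $D$ annihilates every top-degree derivation---to be the crux, and the only place where more than a degree count is required. It is immediate for simply connected $X$ as above; in the genuinely nilpotent case $V^1$ may be nonzero, so $\theta(dv)$ can a priori have a component in $V^1$ for $v\in V^N$, and one must invoke the structure of the minimal model of a nilpotent space---in particular the filtration $V(0)\subseteq V(1)\subseteq\cdots$ with $d(V(k))\subseteq \land V(k-1)$---to control the part of $d$ coupling the top-degree generators to $V^1$. Everything else (the translation via $\pi_*(\Omega\B(X))\cong H_*(\Der(\land V))$, the vanishing above degree $N$, and the identification of $\Der^N(\land V)$) is routine.
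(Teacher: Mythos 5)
Your argument follows the same route as the paper's: transfer everything to $H_*(\Der(\land V))$, kill degrees above $N$ by observing $\Der^{>N}(\land V)=0$, identify $\Der^N(\land V)\cong\Hom(V^N,\Q)$, note there are no boundaries in degree $N$, and reduce to showing every top-degree derivation is a $D$-cycle. Your treatment of the simply connected case is complete, and is in fact more explicit than the paper's, which simply asserts that the elements of $V^N$, ``as the last stage of generators,'' do not occur in the differential of any other generator.

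The step you leave open---controlling, for genuinely nilpotent $X$, the component of $d(V^N)$ lying in $V^N\cdot V^1$---is a real gap, and it cannot be closed by a cleverer use of the filtration $d(V(k))\subseteq\land V(k-1)$. Within the top degree $N$ the generators may be spread over several stages of that filtration, so an earlier-stage generator of degree $N$ can occur in the differential of a later-stage one, and then its dual derivation is not a cycle. Concretely, take $\land(u_1,v_2,y_2)$ with $|u|=1$, $|v|=|y|=2$ and $dy=uv$: this is a nilpotent minimal model (it is the paper's own space $X_{2,0}$ from Theorem \ref{thm: r r+4m+1}), with $N=2$ and $\dim V^2=2$, yet $D(v^*)=\pm(y,u)\neq 0$, so only $y^*$ survives, $H_2(\Der(\land V))\cong\Q$, and $\pi_3(\B(X))\cong\Q$ rather than $\Q^2$; indeed the paper computes $B\aut(X_{2,0})=K(\Q,2)\times K(\Q,3)$. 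So the proposition as literally stated fails in the nilpotent, non--simply connected case (only the vanishing for $i>N+1$ and the nonvanishing of $\pi_{N+1}$ survive in general), and the correct resolution of your flagged step is to add the hypothesis that $X$ is simply connected---or at least that $V^1=0$, or that no degree-$N$ generator appears in $d(V^N)$---under which your argument is already complete.
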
 

\begin{proof}
By hypothesis,  $X$ has minimal model of form $\land V$ with $V$ non-zero only in degrees $\leq N$, and $V^N$ of dimension $r$.  It follows that  $\Der(\land V)$ is a graded vector space that is non-zero only in degrees $\leq N$.  Therefore, we have
$$\pi_{i+1}(B \aut X) \cong \pi_{i}(\Omega B \aut X) \cong H_{i}\big(\Der( \land V)\big) = 0$$
for $i > N$.

Furthermore, in degree $N$, for each $\theta \in \Hom(V^N, \Q)$, we obtain a derivation in $\Der( \land  V)$ of degree $N$ by setting $\theta(V^N) = 0$ and extending as a derivation.  Any such derivation is a $D$-cycle, since the elements of $V^N$---as the last stage of generators---do not occur in the differential of any other generators.  There are no non-zero boundaries of degree $N$, since $\Der(\land V)$ is zero in degree $N+1$ (and higher).  So the vector space $V^N$ persists to homology, and we have
$$\pi_{N+1}(B \aut (X))  \cong \pi_{N}(\Omega B \aut (X))  \cong H_{N}\big(\Der( \land V)\big) \cong \Hom(V^N,\Q).$$
\end{proof}
We next describe a situation in which we can be assured of a nontrivial fibration  $X \to E \to B$ and thus an essential classifying map.   We formulate the result in terms of derivations.  The proof uses  the  {\em Gottlieb group}   $G_*(E) \subseteq \pi_*(E)$.  We recall that $G_*(E)$  is  the image of the map induced on homotopy groups by the evaluation map $\omega \colon \aut (E) \to E$.  
    For $E$ a simply connected rational space with minimal model $(\land (W), d)$,   we have an identification $$G_*(E) =  \mathrm{Im} \{ H(\varepsilon) \colon H_*(\Der(\land  W)) \to \mathrm{Hom}(W, \Q)\} $$
Here $\varepsilon \colon \land W \to \Q$ is the augmentation and so    $H(\varepsilon)(\langle \theta \rangle)$ is the  restriction of the $D$-cycle  $\theta$ to the basis $W$  (see \cite[Th.3.5]{LS1}).   We prove:  \begin{proposition}  \label{lem:non-trivial}    
 Let $X$ be simply connected with Sullivan minimal model $(\land V, d)$.   Suppose  given a KS-fibration 
  $(\land(w_n), 0) \to (\land V \otimes \land(w_n), \D) \to (\land V, d)$
with $w_n$   of degree $n > 1$.   Suppose  the following conditions hold:
\begin{enumerate}
\item  $ (\land(w_n)  \otimes \land V, \D)$ is a minimal DG algebra and  
\item   any derivation $\theta \in \Der_n(\land(w_n) \otimes \land V )$ with  $\theta(w) \neq 0$ satisfies  $D(\theta) \neq 0.$  
\end{enumerate}
    Then there exists an essential map $K(\Q, n) \to \B(X).$  
  \end{proposition}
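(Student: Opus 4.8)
The plan is to exhibit the KS-fibration $X \to E \to K(\Q,n)$ of the hypothesis as a fibration that is \emph{not} fibre-homotopy trivial: its classifying map $h \colon K(\Q,n) \to \B(X)$ is then essential, since by the classification theory $h$ is nullhomotopic if and only if the fibration it classifies is fibre-homotopy equivalent, over $K(\Q,n)$, to the product fibration $K(\Q,n)\times X \to K(\Q,n)$. To rule this out I would use the Gottlieb group of the total space $E$, in the form recalled above: the obstruction will be that $G_n(E)$ maps to zero under $\pi_n(E) \to \pi_n(K(\Q,n))$, whereas for a product fibration it would map onto.

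First I would invoke hypothesis (1) to identify $(\land(w_n)\otimes \land V, \D)$ as the Sullivan minimal model of $E$ (which is simply connected, as $X$ is and $n>1$), with generating space $W := \langle w_n\rangle \oplus V$. Under the resulting isomorphism $\pi_n(E) \cong \Hom(W^n,\Q)$, the map $p_*\colon \pi_n(E) \to \pi_n(K(\Q,n))$ induced by the projection $p\colon E \to K(\Q,n)$ is dual to the inclusion $\langle w_n\rangle \hookrightarrow W^n$, namely the linear part of the model map $(\land(w_n),0)\to(\land W,\D)$; in particular $p_*$ annihilates the summand $\Hom(V^n,\Q)$ of $\Hom(W^n,\Q)$.

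Next, since $E$ is simply connected and rational with minimal model $(\land W, \D)$, the identification $G_n(E) = \im\{H(\varepsilon)\colon H_n(\Der(\land W)) \to \Hom(W,\Q)\}$ applies, where $H(\varepsilon)(\langle\theta\rangle)$ is the restriction to $W$ of the $D$-cycle $\theta$. Hypothesis (2) says exactly that every $D$-cycle $\theta \in \Der_n(\land W)$ satisfies $\theta(w_n) = 0$, so the $\langle w_n\rangle$-component of each such restriction vanishes, and hence $G_n(E) \subseteq \Hom(V^n,\Q)$. Combined with the previous paragraph, $p_*(G_n(E)) = 0$. Now suppose $h$ were nullhomotopic; then $E$ is fibre-homotopy equivalent over $K(\Q,n)$ to $K(\Q,n)\times X$, so $p_*(G_n(E))$ is the image under the first projection of $G_n(K(\Q,n)\times X)$. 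Since Gottlieb groups are homotopy invariants with $G_*(Y\times Z)\cong G_*(Y)\times G_*(Z)$, and since $K(\Q,n)$ is an H-space so that $G_*(K(\Q,n)) = \pi_*(K(\Q,n))$, this image equals $\pi_n(K(\Q,n)) = \Q \neq 0$, a contradiction. Therefore $h$ is essential.

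The only genuinely non-formal point is the bookkeeping of the second step: one must check that hypothesis (1) indeed makes $(\land W, \D)$ minimal, so that both the Gottlieb-group formula applies verbatim and $\pi_n(E)$ is read off correctly from $W$, and that the dualization conventions align $p_*$ on $\pi_n$ with the projection away from $\Hom(V^n,\Q)$. The inputs from classification theory (trivial classifying map $\Leftrightarrow$ trivial fibration) and the Gottlieb-group facts for products and for H-spaces are standard and can be cited.
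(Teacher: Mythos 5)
Your proof is correct and follows essentially the same route as the paper's: realize the KS-extension as a fibration $X \to E \stackrel{p}{\to} K(\Q,n)$, use hypothesis (1) so that the derivation description of the Gottlieb group applies to the minimal model $(\land(w_n)\otimes \land V, \D)$ of $E$, and use hypothesis (2) to conclude that $G_n(E)$ misses the dual of $w_n$, which is incompatible with the fibration being fibre-homotopy trivial. The only (harmless) difference is the final bookkeeping: you derive the contradiction from $p_*(G_n(E))=0$ versus $p_*G_n=\pi_n(K(\Q,n))$ for the product fibration, whereas the paper compares ranks, $\rank G_n(E)\leq \rank G_n(X)$ against $G_n(K(\Q,n)\times X)=\Q\oplus G_n(X)$.
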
 
\begin{proof}
The spatial realization of the given KS-fibration is a fibration of the form $X \to E \stackrel{p}{\to} K(\Q, n)$.  Our hypothesis (2)  implies that the dual of the basis vector $w_n$ is not in the Gottlieb group $G_*(E).$        Thus  $\mathrm{rank}(G_n(E)) \leq  \mathrm{rank}(G_n(X)).$ As regards the product, we have  $G_n(K(\Q, n) \times X) =   \Q \oplus G_n(X).$  Thus $E \not \simeq K(\Q, n) \times X$ and    the classifying map $K(\Q, n) \to \B (X)$ for $p$ is the needed essential map. 
\end{proof}  
We conclude  this section with two  simple examples.  We begin with a realization result for a rank-two H-space.

\begin{example} \label{ex:(8,5)} 
Suppose $X$ has minimal model $(\land(x_3, y_3, z_5, w_7), d)$, with subscripts denoting degrees and differential
$d(x) = 0$, $d(y) = 0$, $d(z) = xy$, and $d(w) = xz$.  Since $\land V$ is freely generated by $V$, any derivation in $\Der(\land V)$
may be specified by its effect on generators in $V$.  Then in positive degrees, a vector space basis for $\Der(\land V)$ may be displayed as follows:

\begin{center}
\begin{tabular}{c|c}
degree & generators\\
\hline
7 & $w^*$\\
\hline
5 & $z^*$\\
\hline
4 & $(w,x)$, $(w,y)$\\
\hline
3 & $x^*$, $y^*$\\
\hline
2 & $(z,y)$, $(z,x)$, $(w,z)$\\
\hline
1 & $(w,xy)$\\
\end{tabular}
\end{center}
Here we are using the notation $(w,x)$ for the derivation that sends $w$ to $x$ and all other generators to $0$, we have written $w^*$ for $(w, 1)$, and so-on.   Direct computation shows that the differential $D$  in $\Der(\land V)$ is given by
$$D(w^*) = 0, \qquad D(z^*) = -(w,x), \qquad D\big( (w,x)\big) = 0, \qquad D\big( (w,y)\big) = 0,$$
$$D(x^*) = (z,y) + (w, z), \qquad D(y^*) = -(z,x), $$
$$D\big( (z,y)\big) = -(w, xy), \qquad D\big( (z,x)\big) = 0, \qquad D\big( (w,z)\big) = (w, xy), $$
$$D\big( (w,xy)\big) = 0.$$
Then  the homology of $\Der(\land V)$  is of rank $1$ in degrees $7$ and $4$, and zero otherwise. Thus, 
$B \aut (X)$ has homotopy groups of rank $1$ in degrees $8$ and $5$.  It follows that we must have
$$B \aut (X) = K(\Q, 5) \times K(\Q, 8).$$
\end{example}

Our next example shows that not all rank-two rational H-spaces can be realized as $\B (X)$ for $X$ simply connected and $\pi$-finite. 

\begin{example} \label{ex:(3,4)} 
We show $K(\Q, 3) \times K(\Q, 4)$ cannot be so realized. 
For suppose $X$ is a simply connected, $\pi$-finite, rational space with $B\aut (X) = K(\Q, 3) \times K(\Q, 4)$.  Since $\pi_*(B\aut (X))$ is zero above degree $4$,  we can conclude  that $\pi_*(X) $ is concentrated in degrees $2$ and $3$.  Further, since we have $\pi_4(B\aut (X)) = \Q$, we must have  $\pi_3(X) = \Q$  by Proposition \ref{lem: pi-finite top degree}.   
Thus the minimal model   for $X$ takes the form $(\land(x_1, \ldots, x_r, y), d)$ with the $x_i$ in degree $2$, $y$ in degree $3$ and  $dx_i = 0$ (for degree reasons).   Proceeding as in \exref{ex:(8,5)}, we may write a vector space basis for  $\Der(\land V)$ in positive degrees as follows:  
\begin{center}
\begin{tabular}{c|c}
degree & generators\\
\hline
3 & $y^*$\\
\hline
2 & $x^*_1, \ldots, x^*_r$\\
\hline
1 & $(y, x_1), \ldots, (y, x_r)$\\
\end{tabular}
\end{center}
We see that $D(y^*) = 0$ and $D\big((y, x_i)\big) = 0.$ If  $\Q  = \pi_3(B \aut (X)) \cong H_2(\Der(\land V), D)$ the map $D \colon \Der^2 \land V \to \Der^1 \land V$ must have  kernel of dimension $1$ and so   image    of dimension $r-1$.  Thus $H_1(\Der(\land V), D) \cong \Q$, contradicting the fact that $\pi_2(B \aut (X)) \cong H_1(\Der(\land V), D) = 0.$ 
\end{example}

In fact,  we may realize $K(\Q, 3) \times K(\Q, 4)$ as the classifying space of a nilpotent (non-simply connected) space---see \thmref{thm: r r+4m+1}.  Examples \ref{ex:(8,5)} and \ref{ex:(3,4)}, along with the results in the next section, indicate the challenge faced in addressing Question \ref{ques: Baut}.  Even amongst rank-two H-spaces $K(\Q, m) \times K(\Q, n)$, it seems difficult  to predict simply  from the degrees $m$ and $n$   
whether or not---and if so, how---the rational homotopy type  can be realized as the classifying space of a $\pi$-finite complex.

\section{Rank-two  $H$-spaces realized as $B \aut (X)$}
\label{sec:realize} 
In this section, we make constructions that realize certain rank-two H-spaces as classifying spaces.    
We first prove  part (2) of \thmref{thm: intro rank 2} of the Introduction.

\begin{theorem}\label{thm: r r+4m+1}  
For each $r \geq 2$ and $m \geq 0$, there exists a  $\pi$-finite, rational space $X_{r,m}$ with 
$$ B\aut (X _{r, m}) = K(\Q, r) \times K(\Q, r + 4m +1).$$
If $m \geq 1$, then we may take $X_{r, m}$ to be simply connected.  If $m =0$, then we require that $X_{r, m}$  be nilpotent, non-simply connected.
\end{theorem}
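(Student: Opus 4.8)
The plan is to realize each $X_{r,m}$ by an explicit three-generator minimal Sullivan algebra and then compute the homology of its derivation Lie algebra directly. Concretely, I would take $X_{r,m}$ to be the rational space with minimal model $(\land(a,b,c),d)$ in which
\[ |a| = 2m+1, \qquad |b| = r+2m, \qquad |c| = r+4m, \]
and $da = db = 0$ while $dc = ab$. Since $|a|+|b| = r+4m+1 = |c|+1$ and $a,b$ are distinct generators (so $ab \ne 0$), this is a well-defined minimal Sullivan algebra; it is $\pi$-finite, with rational homotopy concentrated in the degrees $2m+1$, $r+2m$, and $r+4m$. When $m \ge 1$ every generator lies in degree $\ge 3$, so $X_{r,m}$ is simply connected. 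When $m = 0$ the generator $a$ lies in degree $1$, and $X_{r,0}$ is nilpotent but not simply connected: the $\pi_1$-action on $\pi_r(X_{r,0})$ is nilpotent, as one reads off from the quadratic part $dc = ab$ of the differential (the associated operator sends $c^\vee$ to a multiple of $b^\vee$ and kills $b^\vee$).

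With the model fixed, I would argue as in \exref{ex:(8,5)} and \exref{ex:(3,4)}. By the graded Lie algebra isomorphism $\pi_*(\Omega \B(X)) \cong H_*(\Der(\land V))$ recorded in \secref{sec:prelim}, it is enough to show that $H_*(\Der(\land(a,b,c)))$ is one-dimensional in degrees $r-1$ and $r+4m$ and vanishes in every other positive degree, and that the induced Lie bracket on this homology is trivial. The first task is to list a basis of $\Der(\land(a,b,c))$ in positive degrees. Writing $(u,\chi)$ for the derivation carrying a generator $u$ to $\chi$ and the other two generators to $0$, and $u^*$ for $(u,1)$, a short check---in which $r \ge 2$ is used to bound which monomials $b^{j}$ and $ab^{j}$ can occur as targets of a positive-degree derivation---shows that this basis is
\[ c^{*},\quad b^{*},\quad (c,a),\quad a^{*},\quad (c,b),\quad (b,a), \]
of degrees $r+4m$, $r+2m$, $r+2m-1$, $2m+1$, $2m$, and $r-1$ respectively (the list degenerates slightly when $m = 0$, where $|b| = |c| = r$). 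Note that the hypothesis $r \ge 2$ is precisely what makes $(b,a)$, of degree $r-1$, a positive-degree derivation; it is this class that will survive in degree $r-1$.

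The second task is to compute $D = [d,-]$ on this basis. Differentiating $dc = ab$ gives $D(c^{*}) = 0$, $D(b^{*}) = \pm(c,a)$, and $D(a^{*}) = \pm(c,b)$, while $(c,a)$, $(c,b)$, and $(b,a)$ are all $D$-cycles---the last because $(b,a)(dc) = \pm a^2 = 0$, since $a$ has odd degree. Hence the cycles are spanned by $c^{*}, (c,a), (c,b), (b,a)$, the boundaries by $(c,a), (c,b)$, and so $H_*(\Der(\land(a,b,c)))$ is spanned by the classes of $c^{*}$ in degree $r+4m$ and of $(b,a)$ in degree $r-1$. One then checks that in the handful of cases where two of the six degrees coincide---namely $r = 2$, $r = 2m+1$, $r = 2m+2$, together with the $m=0$ degeneration---no extra homology appears. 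Finally, the bracket of the two surviving classes would lie in degree $(r-1)+(r+4m) = 2r+4m-1 > r+4m$, where $\Der$ vanishes, and the self-bracket of the odd-degree class $(b,a)$ vanishes because $(b,a)\circ(b,a) = 0$; so the homology is an abelian graded Lie algebra. Therefore the rational homotopy Lie algebra of $\B(X_{r,m})$ is abelian with underlying groups $\Q$ in degrees $r$ and $r+4m+1$, whence $\B(X_{r,m}) \simeq_\Q K(\Q,r) \times K(\Q,r+4m+1)$.

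I expect the main obstacle to be the bookkeeping in these last two tasks: keeping the list of positive-degree derivations complete and honest---their number genuinely depends on the relative sizes of $r$ and $m$, through the monomials $b^{j}$ and $ab^{j}$ that can serve as targets---and then verifying that every basis derivation outside the two one-dimensional homology classes is either not a $D$-cycle or is a $D$-boundary, including in each degenerate-degree configuration. The remaining points (minimality and nilpotence of the model, and vanishing of the bracket) are routine.
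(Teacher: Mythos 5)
Your model for $X_{r,m}$ is exactly the one the paper uses (your $a,b,c$ are its $u_{2m+1}, v_{2m+r}, y_{4m+r}$ with $dy=uv$), and your derivation computation is correct: $H_*(\Der(\land V))$ has rank one in degrees $r-1$ and $r+4m$ and vanishes otherwise. The gap is in your final step. From the isomorphism $\pi_*(\Omega \B(X))\cong H_*(\Der(\land V))$ you learn the rational homotopy groups of $\B(X_{r,m})$ and that all Whitehead products vanish (the Samelson bracket is trivial), but an abelian rational homotopy Lie algebra does \emph{not} force a space to be a product of Eilenberg--Mac\,Lane spaces: the quadratic part of the minimal-model differential may vanish while higher-order terms do not. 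Concretely, when $r$ is even and $4m+2=kr$ with $k\geq 2$ (e.g.\ $r=2$, $m=1$, where the competing space is rationally $\C P^3$), the space $Z$ with $H^*(Z)=\land(z_r)/\langle z_r^{k+1}\rangle$ has minimal model $(\land(z_r,w_{r+4m+1}), dw=z^{k+1})$ with $k+1\geq 3$; its homotopy Lie algebra is also abelian with $\Q$ in degrees $r$ and $r+4m+1$, so your criterion cannot distinguish $\B(X_{r,m})$ from $Z$. (Your argument does suffice whenever $r$ is odd, or $r$ even with $r\nmid 4m+2$, since then the homotopy groups alone determine the type; it also handles $4m+2=r$, where $Z$ has a nonzero Whitehead square.)

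The paper closes exactly this case by a different device: it builds a KS-extension $(\land(z_r),0)\to(\land(z)\otimes\land(u,v,y),\D)\to(\land(u,v,y),d)$ with $\D(v)=uz$, checks via Proposition~\ref{lem:non-trivial} (a Gottlieb-group argument) that the resulting fibration over $K(\Q,r)$ is not fibre-homotopy trivial, and hence obtains an essential map $K(\Q,r)\to\B(X_{r,m})$. Since the truncated polynomial space $Z$ admits no essential map from $K(\Q,r)$, the classifying space must be the product of Eilenberg--Mac\,Lane spaces. You need some argument of this kind---detecting more than the homotopy Lie algebra---to finish the even-$r$ divisible case.
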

\begin{proof}  We define the space $X_{r,m}$ in terms of a minimal model  $(\land(u_{2m+1}, v_{2m+r}, y_{4m+r}), d)$ with subscripts indicating degrees and nonzero differential  $d(y) = uv.$    The generators of $\Der(\land V)$ are given by the table:
\begin{center}
\begin{tabular}{c|c}
degree & generators\\
\hline
$4m+r$  & $y^*$\\
\hline
$2m+r$ & $v^*$\\
\hline
$2m+r-1$ & $(y, u)$\\
\hline
$2m+1$ & $u^*$ \\
\hline
$2m$ & $(y, v)$ \\
\hline
$r-1$ & $(v, u)$ \\
\end{tabular}
\end{center}
Note that we may have $r-1 < 2m$, as pictured, or it may fall in the range $2m \leq  r-1 < 2m + r-1$; this makes no difference to our calculation.
The only nonzero differentials are $$ D(v^*) =   \pm (y, u) \hbox{\, and \,} D(u^*) = \pm(y, v).$$
Thus $H_*(\Der(\land V), D)$ has rank $1$ in degrees $r-1$ and $4m +r$ and is trivial in all other degrees.  It follows that $\pi_i(\aut (X_{r, m})) \cong \Q$ for $i = r-1$ and $i = 4m+r$, and zero otherwise.  Hence, we  see that $B\aut (X_{r, m})$ has the correct homotopy groups.  When $4m+2$ is not a  multiple of $r$, or if $r$ is odd, this is sufficient to determine that the rational homotopy type of 
$B\aut (X_{r, m})$ is as asserted, since there is only one rational homotopy type with such homotopy groups.  

So suppose that $4m+2 = kr$ for some $k\geq1$, and that  $r$ is even.  Here, we must distinguish $B \aut (X_{r, m})$ from the space $Z$ with truncated polynomial cohomology $H^*(Z) = \land(z_r)/ \langle z_r^{k+1}\rangle$, where $z_r$ denotes a generator of (even) degree $r$.    We use Proposition \ref{lem:non-trivial} to do so.  Define a KS-extension
$$   (\land(z_r),  0) \to  (\land(z) \otimes (\land (u, v, y), \D) \to   (\land(u, v, y), d)$$
by setting $\D(u) = \D(z) = 0$, $\D(v) = uz$ and $\D(y) = d(y) = uv.$  Notice that, since $u^2 = 0$, we have $\D^2 = 0$. 
Clearly,  the DG algebra $(\land(z) \otimes \land (u, v, y),  \D)$ is minimal.  Observe that  $D(z^*)=  \pm (v,u).$
It follows easily that $\theta(z) \neq 0$ implies $D(\theta) \neq 0.$ We conclude there is  an essential map $h \colon K(\Q, r) \to B\aut (X_{r,m}).$  Since $ Z$ admits no such map---as is easy to see, for example, using minimal models---we must have $B\aut (X_{r, m}) = K(\Q, r) \times K(\Q, r + 4m +1)$ in this case also.   
 \end{proof}
 
\begin{remark}
We may describe the spaces $X_{r, m}$ of \thmref{thm: r r+4m+1} without reference to minimal models, as two-stage Postnikov pieces.  Namely, $X_{r, m}$ is the total space in a principal fibration $K(\Q, 4m+r) \to X_{r, m} \to   K(\Q, 2m+1) \times K(\Q, 2m+r)$, with $k$-invariant   
$K(\Q, 2m+1) \times K(\Q, 2m+r) \to K(\Q, 4m+r+1)$ that corresponds to  the non-zero cup-product $uv \in H^{4m+r+1}\big(K(\Q, 2m+1) \times K(\Q, 2m+r)\big)$,  with $u \in H^{2m+1}\big(K(\Q, 2m+1)\big)$ and $v \in H^{2m+r}\big(K(\Q, 2m+r) \big)$ generators.
\end{remark}

Now we complete \thmref{thm: intro rank 2} of the Introduction by defining simply connected, $\pi$-finite spaces $X_n$ with  $B \aut (X_n)$ of the rational homotopy type of $S^{2n+1} \times S^{4n+1}$, for each $n$ odd and $n \geq 1$.    We will write the details assuming that $n \geq 5$.  In the cases in which $n = 1$ or $3$, the details are very similar, with some minor differences due to the fact that, for these low-end cases, the degrees of some of the generators, or the differences between the degrees of some of the terms,  coincide (or become negative, in which case they may be set aside).  
 
We describe the space $X_n$ in terms of a minimal model.   The model has $6$ generators, and so $X_n$ is a $\pi$-finite space.  The following table gives a vector space basis for $\land V$ through the degrees of the highest generator.  Notice that, since $n$ is odd, and thus the degree of $v_1$ and $v_2$ is even, we must allow for powers of these generators. We will use this information to identify a basis for $\Der(\land V)$.

$$\begin{tabular}{c|c|c}
degree  & generator & decomposables\\ 
\hline
$4n$ & y &  \\
$3n+3$ &  & $v_1^3, v_1^2v_2, v_1v_2^2, v_2^3$ \\
$3n+1$ &  & $v_1w, v_2w$ \\
$3n$ & $u_1, u_2$ & \\
$2n+2$ &  & $v_1^2, v_1v_2, v_2^2$ \\
$2n$ & $w$ &  \\
$n+1$ & $v_1, v_2$ &  \\
\end{tabular}$$
The non-zero differentials in $\land V$ are defined to be
$$dy = u_1v_1 + u_2v_2, \quad du_1 = - v_2 w, \quad du_2 = v_1w.$$

\begin{theorem}\label{thm: 2n+1 4n+1}
With $X_n$ as above, we have $$B \aut (X_n) = K(\Q, 2n+1) \times K(\Q, 4n+1)$$ for $n \geq 1$ and $n$ odd.
\end{theorem}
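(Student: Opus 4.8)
The plan is to compute $H_*(\Der(\land V), D)$ directly — as in \exref{ex:(8,5)} and \thmref{thm: r r+4m+1} — and then read off $\pi_*(\B(X_n))$ via the isomorphism $\pi_*(\Omega\B(X_n))\cong H_*(\Der(\land V))$. Since $n$ is odd, $v_1$ and $v_2$ have even degree $n+1$, so $\land V$ carries their squares and products, and the decomposables displayed in the table above are exactly those that can occur as $\theta(g)$ for a negative-degree derivation $\theta$ and a generator $g$. Using this, the first step is to list a basis of $\Der(\land V)$ in positive degrees: in descending degree, $y^*$ in degree $4n$; $u_1^*,u_2^*$ in $3n$; $(y,v_1),(y,v_2)$ in $3n-1$; $w^*,(y,w)$ in $2n$; $(u_i,v_j)$ in $2n-1$; $(y,v_iv_j)$ in $2n-2$; $v_1^*,v_2^*$ in $n+1$; $(y,u_i),(u_i,w)$ in $n$; $(y,v_iw),(w,v_j)$ in $n-1$; $(u_i,v_jv_k)$ in $n-2$; and $(y,v_iv_jv_k)$ in $n-3$. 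For $n\geq5$ these eleven degrees are pairwise distinct and, since $y$ is the top generator, no positive-degree derivation lies outside this list.

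The second step is to compute $D=[d,-]$ on these generators; the outcome is that $D$ splits $\Der(\land V)$ into short acyclic blocks with exactly two surviving classes. For instance $D(y^*)=0$ and $D(u_i^*)=\pm(y,v_i)$, so degrees $3n$ and $3n-1$ cancel; $D((y,w))=0$ while $D(w^*)=(u_1,v_2)-(u_2,v_1)$, and $D$ carries $\Der^{2n-1}$ onto $\Der^{2n-2}$ with kernel exactly $\langle D(w^*)\rangle$, so degrees $2n-1,2n-2$ contribute nothing and $(y,w)$ survives in degree $2n$; likewise $D(v_1^*)=-(y,u_1)-(u_2,w)$ and $D(v_2^*)=-(y,u_2)+(u_1,w)$ are independent, the kernel of $D$ on $\Der^n$ equals this image, $D$ on $\Der^{n-1}$ annihilates $(y,v_iw)$ and sends $(w,v_1),(w,v_2)$ to independent elements, and $D$ maps $\Der^{n-2}$ onto $\Der^{n-3}$ with kernel equal to the image of $D$ from $\Der^{n-1}$. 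Tallying ranks, $H_*(\Der(\land V),D)$ has rank one in degrees $2n$ and $4n$ — represented by $(y,w)$ and $y^*$ — and is zero in all other positive degrees. (The class in the top degree $4n$, together with the vanishing above it, also follows directly from \propref{lem: pi-finite top degree}, as $\pi_{4n}(X_n)=\Q$ is the top homotopy group of $X_n$.)

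By the isomorphism $\pi_*(\Omega\B(X_n))\cong H_*(\Der(\land V))$, this gives $\pi_i(\aut(X_n))\cong\Q$ for $i=2n,4n$ and $0$ otherwise, hence $\pi_i(\B(X_n))\cong\Q$ for $i=2n+1,4n+1$ and $0$ otherwise. Both of these degrees are odd, so a minimal model $(\land(a_{2n+1},b_{4n+1}),\delta)$ for any space with these homotopy groups must have $\delta=0$: the only decomposables in degrees $2n+2$ and $4n+2$ are squares of odd generators, which vanish. Thus there is a unique rational homotopy type with these homotopy groups, so $\B(X_n)=K(\Q,2n+1)\times K(\Q,4n+1)$, as claimed — and, in contrast to the final case of \thmref{thm: r r+4m+1}, no auxiliary fibration argument (\propref{lem:non-trivial}) is needed here. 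The cases $n=1,3$ are settled by the same computation, with certain rows of the tables coinciding or disappearing (when a difference of degrees becomes non-positive), which only streamlines the bookkeeping.

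The step I expect to be the main obstacle is the computation itself: there is no real shortcut around evaluating $D$ on the two dozen or so basis derivations and tracking kernels and images through the eleven relevant degrees. The subtle point is verifying, in each cancelling block, that the kernel of $D$ in one degree equals — not merely contains, nor is contained in — the image of $D$ from the degree just above; this is where signs depending on the parities of $n$, $n+1$, $2n$, $3n$ must be handled with care, and where the even-degree generators $v_1,v_2$ make $\Der(\land V)$ strictly larger than a naive linear-part count would predict.
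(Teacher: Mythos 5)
Your proposal is correct and follows essentially the same route as the paper: a direct computation of $H_*(\Der(\land V),D)$ by listing the basis derivations, cancelling them in short exact blocks, identifying $y^*$ and $(y,w)$ as the only survivors, and invoking uniqueness of the rational homotopy type (both degrees $2n+1$ and $4n+1$ being odd) to conclude. The only difference is cosmetic — you organize the basis by degree where the paper organizes it into cancelling ``groups'' — and your explicit formulas (e.g.\ $D(w^*)=(u_1,v_2)-(u_2,v_1)$, $D(v_1^*)=-(y,u_1)-(u_2,w)$) agree with the paper's.
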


\begin{proof}
We write a linear basis for $\Der(\land V)$.  The following table groups the basis elements for $\Der(\land V)$ according as they contribute to the homology of $\Der(\land V)$.  We shall see that the elements under group $0$ are those that persist to homology, whereas all the remaining groups of terms form short exact sequences that do not contribute to homology.  The result will follow. 

\medskip

\begin{center}
\begin{tabular}{c|c|c|c|c|c}
degree & group $0$ & group $1$ & group $2$ & group $3$ & more groups\\
\hline
$4n$ & $y^*$ & & &\\
\hline
$3n$ & & $u_1^*$, $u_2^*$ & & \\
\hline
$3n-1$ & & $(y, v_1)$, $(y, v_2)$ & &\\
\hline
$2n$ & $(y,w)$ & &  $w^*$ & \\
\hline
$2n-1$ & & &  $(u_1, v_2)$, $(u_2, v_1)$ &$(u_1, v_1)$,  $(u_2, v_2)$ \\
\hline
$2n-2$ & & &  $(y, v_1v_2)$ & $(y, v_1^2)$,  $(y, v_2^2)$ \\
\hline
$n+1$ & & &  &  & \vdots \\
\hline
$n$ & & &  &  & \vdots \\
\hline
$n-1$ & & &   &  & extracted \\
\hline
$n-2$ & & &  &  & below \\
\hline
$n-3$ & & &  &  & \vdots \\
\end{tabular}
\end{center}

\medskip

The lower-right portion of the table is as follows:

\medskip

\begin{center}
\begin{tabular}{c|c|c|c|c}
degree & group $4$ & group $5$ &  group $6$ & more groups\\
\hline
$n+1$ & $v_1^*$ &  $v_2^*$ &   \\
\hline
$n$ &  $(y, u_1)$, $(u_2, w)$ &  $(y, u_2)$, $(u_1, w)$ &   \\
\hline
$n-1$ &  $(y, v_1w)$ &  $(y, v_2w)$ & $(w, v_1)$ & \vdots \\
\hline
$n-2$ &  &  &  $(u_1, v_1v_2)$, $(u_2, v_1^2)$  & below \\
\hline
$n-3$ &   &  &  $(y, v_1^2v_2)$    & \vdots \\
\end{tabular}
\end{center}

\medskip

And the lower-right portion of this table is as follows:

\medskip

\begin{center}
\begin{tabular}{c|c|c}
degree &  group $7$ & group $8$ \\
\hline
$n-1$ &    $(w, v_2)$ \\
\hline
$n-2$   & $(u_1, v_2^2)$, $(u_2, v_1v_2)$ & $(u_1, v_1^2)$, $(u_2, v_2^2)$ \\
\hline
$n-3$  & $(y, v_1v_2^2)$& $(y, v_1^3)$, $(y, v_2^3)$\\
\end{tabular}
\end{center}

\medskip

\noindent\textit{Group 0}: It is clear that $y^*$ and $(y, w)$ are both non-bounding $D$-cycles.

\medskip

\noindent\textit{Group 1}:   We have
$$D(u_1^*) =  (y, v_1), \qquad D(u_2^*) =  (y, v_2).$$
Hence $D \colon \Der^{3n}( \land V) \to \Der^{3n-1}(\land V)$ is a linear isomorphism.  

\medskip

\noindent\textit{Group 2}:   We have $D(w^*) = d \circ w^*  - w^* \circ d = - w^* \circ d$.  When this is evaluated on $u_1$ and $u_2$, which are the only elements whose differentials involve $w$, we find that
$$D(w^*) =  (u_1, v_2) -  (u_2, v_1).$$
Furthermore, we have
$$D\big((u_1, v_2)\big) =  (y, v_1v_2), \qquad  D\big( (u_2, v_1)\big)= (y, v_1v_2).$$
It follows that 
$$0 \to \langle w^* \rangle \to \langle (u_1, v_2), (u_2, v_1) \rangle\to \langle (y, v_1v_2) \rangle \to 0,$$
in which the maps are $D$, is a short exact sequence.  Hence the group 2 terms contribute no homology.  

\medskip

\noindent\textit{Group 3}:   We have 
$$D\big((u_1, v_1)\big) = (y, v_1^2), \qquad D\big((u_2, v_2)\big) = (y, v_2^2).$$
Hence $D$ gives a linear isomorphism 
$$\langle (u_1, v_1), (u_2, v_2) \rangle \to \langle (y, v_1^2), (y,  v_2^2) \rangle,$$
and the group 3 terms contribute no homology. 

\medskip

\noindent\textit{Groups 4, 5, 6 and 7}:   These are shown to contribute no homology in the same way as for the group 2 terms: we have a short exact sequence in each case.

\medskip

\noindent\textit{Group 8}:  This is shown to contribute no homology in the same way as for the group 3 terms: $D$ gives a  linear isomorphism.

Thus far, we have shown that $\pi_i(\aut (X_n)) \cong H_i(\Der(\land V)) \cong \Q$ for $i = 2n, 4n$, and is zero otherwise.  This gives $B \aut (X_n)$ the correct rational homotopy groups, but in fact this is sufficient to determine the rational homotopy type, since there is a unique rational homotopy type with the desired rational homotopy groups.  This completes the argument for $n \geq 5$.  We briefly indicate how things proceed in the low-end cases. 

\noindent$n=3$:  Referring to the minimal model,  we have  $4n = 3n+3 = 12$, and so the highest-degree generator $y$ is in the same degree as the cubic terms in the $v_i$.  In the groups that appear in $\Der(\land V)$, groups $1$--$5$ are unchanged.  The only point to bear in mind for the remaining three groups is that $n-3 =0$.  However, the outcome, as regards homology in positive degrees---which is what we are concerned with here---is unchanged. Namely, these zero-degree terms still play the role of being in the image of the differential $D$ from degree $1$, meaning that we still have no non-zero (positive-degree) homology from these groups of terms.

\noindent$n=1$:  Here there is more coalescing of degrees of the various terms.   Referring to the minimal model,  we have  $2n = n+1 = 2$, and so the generators $w, v_1, v_2$ are now all in the same degree of $2$.  Furthermore, $3n+3 = 6$, whereas $4n = 4$,  so the cubic terms in the $v_i$ now appear above the highest-degree generator $y$, and so may be set aside.  Also, the quadratic terms in the $v_i$, as well as the products $wv_i$ appear in the same degree as $y$.  In the groups of terms that appear in $\Der(\land V)$, groups $1$--$5$ are unchanged.  As in the previous case, although here we have $n-1 =0$, the terms that appear in this degree still play their same role, as boundaries of elements from degree $1$, which means that the positive-degree homology contributed by those terms is still zero.  Here, groups $6$--$8$ may be set aside, as they occur completely in non-positive degrees (their homology is still zero, though). 
\end{proof}

\section{Non-Realization Results}\label{sec: non-realization}

In this section, we prove that several simple rational homotopy types cannot be realized as $\B (X)$ for $X$ simply connected and $\pi$-finite.  We begin with the following: 

\begin{theorem}\label{thm:CP}
There is no simply connected, $\pi$-finite, rational space $X$ for which $B \aut (X)$ has the rational homotopy type of $\C P^2$.   
\end{theorem}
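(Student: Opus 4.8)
The plan is to combine \propref{lem: pi-finite top degree}, which restricts the minimal model of $X$, with \propref{lem:non-trivial}, which manufactures essential maps out of $K(\Q,n)$, and to reach a contradiction from the observation that no rational space of the homotopy type of $\C P^2$ admits an essential map from $K(\Q,2)$. So suppose $\B(X)\simeq_\Q\C P^2$; then $\pi_*(\B(X))$ is $\Q$ in degrees $2$ and $5$ and is zero otherwise. Since $\C P^2$ has top non-zero homotopy in degree $5$, of rank one, \propref{lem: pi-finite top degree} forces $X$ to have a minimal model $(\land V,d)$ with $V$ concentrated in degrees $2,3,4$ and $\dim V^4=1$. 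Write $V^2=\langle a_1,\dots,a_p\rangle$, $V^3=\langle b_1,\dots,b_q\rangle$ and $V^4=\langle y\rangle$; minimality and a degree count give $da_i=0$, $db_j\in\land^2V^2$ (a quadratic polynomial in the $a_i$) and $dy=\sum_{i,j}c_{ij}a_ib_j$. Set $C=(c_{ij})$.

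First I would read off what the remaining homotopy groups of $\B(X)$ say. Listing a basis for $\Der(\land V)$ in low degrees, exactly as in \exref{ex:(8,5)} and \exref{ex:(3,4)}, shows that $\pi_3(\B(X))\cong H_2(\Der(\land V))$ vanishes precisely when $\mathrm{rank}(C)=p$ (so $p\le q$), and a similar bookkeeping with $H_1$ and $H_3$ excludes the degenerate low-rank models. In particular $p\ge 1$: if $p=0$ then $dy=0$ and all $db_j=0$, so $X$ is a product of Eilenberg--Mac Lane spaces and a direct computation shows $\B(X)$ cannot have the homotopy groups of $\C P^2$. Using $\mathrm{rank}(C)=p$ I would change basis in $V^2\oplus V^3$ so that $dy=\sum_{i=1}^{p}a_ib_i$, with $b_{p+1},\dots,b_q$ absent from $dy$; then $d^2y=0$ becomes $\sum_{i=1}^{p}a_i\,db_i=0$ in $\land V$.

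The main step is to construct a non-trivial KS-fibration over $K(\Q,2)$. Introduce a generator $w$ of degree $2$. For $p\ge2$, set $\D w=0$, $\D a_i=0$, $\D b_1=db_1+wa_2$, $\D b_2=db_2-wa_1$, $\D b_j=db_j$ for $j\ge3$, and $\D y=dy$; graded commutativity of the $a_i$ together with $\sum a_i\,db_i=0$ gives $\D^2=0$, the fibre is $X$, the algebra $(\land(w)\otimes\land V,\D)$ is evidently minimal, and condition~(2) of \propref{lem:non-trivial} holds, because for a degree-$2$ derivation $\theta$ with $D(\theta)=0$ the identity $D(\theta)(y)=-\sum_{i\le p}\theta(a_i)b_i$ forces all $\theta(a_i)=0$, after which $D(\theta)(b_1)=-\theta(w)\,a_2$, so $\theta(w)\ne0$ implies $D(\theta)\ne0$. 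For $p=1$ one has $dy=a_1b_1$ and $d^2y=0$ forces $db_1=0$; if some $db_j=E_ja_1^2$ with $j\ge2$ is non-zero, the analogous twist $\D b_1=wa_1$, $\D b_j=db_j$, $\D y=a_1b_1+w\sum_{j\ge2}\gamma_jb_j$ with $\sum_j\gamma_jE_j=-1$ does the job, while the remaining possibility (all $db_j=0$) makes $b_2,\dots,b_q$ free summands and a direct computation gives $H_1(\Der(\land V))=0$ for $q=1$ and $H_3(\Der(\land V))\ne0$ for $q\ge2$, contradicting the homotopy of $\C P^2$. So in each surviving case \propref{lem:non-trivial} yields an essential map $K(\Q,2)\to\B(X)$.

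It remains to note that no such map can exist. A map $K(\Q,2)\to\C P^2$ corresponds to a DG algebra morphism $\phi\colon(\land(x_2,y_5),\,dy=x^3)\to(\land(t_2),0)$; necessarily $\phi(x)=ct$ and $\phi(y)=0$, so $0=d(\phi(y))=\phi(x^3)=c^3t^3$ and hence $c=0$, i.e.\ $\phi$ is trivial and the map is null-homotopic. This contradicts the essential map found above, completing the proof. The principal obstacle is the construction of the twisted differential $\D$: one must write it down explicitly and, in every model left open by the homotopy-group constraints, verify both minimality of $(\land(w)\otimes\land V,\D)$ and hypothesis~(2) of \propref{lem:non-trivial}; the degenerate low-rank models, which have to be ruled out directly from $H_*(\Der(\land V))$ rather than by this construction, are the delicate part.
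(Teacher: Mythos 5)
Your proposal is correct and follows essentially the same route as the paper: use Proposition~\ref{lem: pi-finite top degree} and the vanishing of $\pi_3,\pi_4$ to normalize the model to $dy=\sum a_ib_i$ with the degree~$2$/degree~$3$ pairing nondegenerate, dispose of the low-rank cases by direct computation of $H_*(\Der(\land V))$, and in the main case build the twisted KS-extension over $K(\Q,2)$ and invoke Proposition~\ref{lem:non-trivial}. The only (harmless) divergence is at the end, where you rule out an essential map $K(\Q,2)\to\C P^2$ by a direct minimal-model computation, whereas the paper uses the F\'elix--Halperin mapping theorem to conclude infinite rational category and thereby proves a slightly more general non-realization statement.
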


\begin{proof}
In fact we show a  more general statement.  We will assume only that $B \aut (X)$ has rational homotopy groups of the form
$$\pi_i(B \aut (X)) = \begin{cases}  0 & i\geq 5 \\
\Q & i = 5 \\
0 & i = 3, 4\\
\Q^k\  \mathrm{some}\  k \geq 1 & i = 2
\end{cases}$$
and conclude that, at least  if $X$ is assumed $\pi$-finite and simply connected, $B \aut (X)$ must have infinite rational category. This rules out the possibility  of
  $B \aut (X)$ being the rationalization of $\C P^2$ or   of any  other finite complex.

To this end, we first show that, without loss of generality, we may assume that $X$ has minimal model of the form
\begin{center}
\begin{tabular}{c|c}
degree & generators\\
\hline
4 & $y$\\
\hline
3 & $u_1, \ldots, u_r$\\
\hline
2 & $v_1, \ldots, v_r$\\
\end{tabular}
\end{center}
for some $r \geq 2$, and furthermore that the differential on the top-degree generator is
$$dy = u_1 v_1 + \cdots + u_rv_r.$$
There may be non-zero differentials $d \colon V^3 \to \land^2 (V^2)$ as well, but these do not play a role in our argument. 

To see this,  start by applying \lemref{lem: pi-finite top degree} to obtain that the minimal model for $X$ must have a single generator in degree $4$, and no higher-degree generators.  Then write $V^3 = \langle u_1, \ldots, u_r \rangle$ and  $V^2 = \langle v_1, \ldots, v_s \rangle$, for some $r, s \geq 0$.  Now we must have $r \geq 1$, otherwise there would be no possibility for having $\pi_2(B \aut (X))  \not= 0$.  For degree reasons, we may write 
$$d(y) = u_1\beta_1 + \cdots + u_r\beta_r$$
for some $\beta_i \in V^2$ (possibly zero, at this point in the argument).  Then the derivations $u_i^*$ have boundary $D(u_i^*) = (y, \beta_i)$ for each $i$.  So, if $\{ \beta_1, \dots, \beta_r \}$ were linearly dependent, we would have a cycle of degree $3$, of the form  $\sum u_i^*$, that could not be a boundary, and so would contribute a non-zero element to  $\pi_4(B \aut (X))\otimes \Q$.  This contradicts our assumption on the rational homotopy of $B \aut (X)$, and so we must have $s \geq r$, with $\{ \beta_i \}_{i=1, \ldots, r}$ linearly independent in $V^2$.  Next, consider the derivations $(y, v_j)$ of degree $2$.  Each of these is a cycle and, since we assume $\pi_3(B \aut (X)) = 0$, the image of the differential $D$ must span $\langle (y, v_1), \ldots, (y, v_s) \rangle$.  However, the only boundaries we have available here are given by the $D(u_i^*)$, since the only degree-$3$ derivations are the $u_i^*$.  Therefore, we must have $r \geq s$, and hence $r = s$.  Then the $\{ \beta_i \}_{i=1, \ldots, r}$ are a basis for $V^2$, and we re-label them as $\beta_i = v_i$ for each $i$.   

Next we  eliminate the case in which $r = 1$.  This case consists of the minimal model $(\land(v, u, y), d)$ with single non-differential $d(y) = uv$ (it is not possible for $d(u)$ to be non-zero here).  A direct calculation shows that, for this $X$, we have $B \aut (X) = S^5$, and in particular $\pi_2(B \aut (X)) = 0$.

Now suppose $X$ has minimal model $(\land(v_1, \ldots, v_r, u_1, \ldots, u_r, y), d)$, with the degrees as above, with $r\geq 2$, and with the differential on $y$ of the form
$$d(y) = u_1v_1 + u_2v_2 + \cdots + u_rv_r.$$
 We  construct a KS-extension
$$(\land (z_2), 0) \to (\land(z) \otimes \land(v_1, \ldots, v_r, u_1, \ldots, u_r, y ), \D) \to (\land(v_1, \ldots, v_r, u_1, \ldots, u_r, y), d),$$
by setting $\D(u_1) = z v_2 + d(u_1)$ and $\D(u_2) = z v_1 + d(u_2)$, and $\D = d$ on all generators other than $u_1, u_2$.  We see directly that $(\land(z) \otimes \land(v_1, \ldots, v_r, u_1, \ldots, u_r, y ), \D)$ is minimal.  Using the fact that   $D(z^*) = \pm (u_1, v_2) \pm (u_2, v_1)$ is non-zero it is easy to see that $\theta(z) \neq 0$ implies $D(\theta) \neq 0.$   Proposition \ref{lem:non-trivial} gives an essential map  $h \colon K(\Q, 2) \to B \aut (X).$
   Since we assume $B \aut (X)$ only has non-zero rational homotopy groups in degrees $2$ and $5$, the only possibility for such a  map is one that is injective in rational homotopy groups in degree $2$.   This implies, using the mapping theorem of F{\'e}lix-Halperin \cite[Th.28.6]{F-H-T01}, that $\cat0 ( B \aut (X) )= \infty$.  In fact, it is easy to see that $H^2(B \aut (X))$ must contain an element $a$ such that $a^n \not=0$ for all $n \geq 1$, so that $B \aut (X)$ actually has infinite rational cup-length.    
\end{proof}

We apply  similar  arguments to prove:

\begin{theorem}\label{thm: S4}
There is no  simply connected, $\pi$-finite, rational space $X$ for which $B \aut (X)$ has the rational homotopy type of $S^4$.   
\end{theorem}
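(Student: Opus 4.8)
The plan is to follow the same strategy as in the proof of \thmref{thm:CP}, adapted to the homotopy groups of $S^4$. First I would apply \lemref{lem: pi-finite top degree}: since $\pi_i(B\aut(X)) = \Q$ for $i = 4$ and vanishes above degree $4$, the minimal model for $X$ must have $V$ concentrated in degrees $\leq 3$ with $V^3$ one-dimensional. So write the minimal model as $(\land(v_1, \dots, v_s, u), d)$ with $u$ in degree $3$, the $v_i$ in degree $2$, and $du = 0$ for degree reasons. Since $d$ must be decomposable and the only generators of degree $\geq 2$ are the $v_i$ (even degree) and $u$, we have $d(u) \in \land^2(V^2)$, a quadratic polynomial in the $v_i$. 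Thus the situation is genuinely rank-one at the top (unlike the $\C P^2$ case), which both simplifies and constrains the analysis.

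Next I would enumerate a basis for $\Der(\land V)$ in positive degrees and compute enough of the differential $D$ to pin down the homology in degrees $1$, $2$, and $3$. The generators are: $u^*$ in degree $3$; the $v_j^*$ and the derivations $(u, v_j v_k)$ (sending $u$ to a quadratic monomial) in degree $1$; and derivations $(u, v_j)$ in degree $1$ as well—wait, $(u,v_j)$ lowers degree by $1$, so it sits in degree $1$, while $v_j^*$ is degree $2$. Actually the only positive-degree derivations are $u^*$ (degree $3$), the $v_j^*$ (degree $2$), and the $(u, v_j)$ (degree $1$); the $(u, v_jv_k)$ have degree $0$ and are irrelevant to positive-degree homology. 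We have $D(u^*) = -u^* \circ d$, which records the quadratic form $d(u)$ evaluated via the $v_j^*$; and $D(v_j^*) = -v_j^* \circ d$ which, since $d(u) \in \land^2 V^2$, produces combinations of the $(u, v_k)$. The hypothesis $\pi_3(B\aut(X)) \cong H_2(\Der(\land V)) = 0$ forces $D$ restricted to the degree-$2$ part to be injective, hence $s \leq \#\{(u,v_k)\} = s$ with the map an isomorphism onto its image; combined with $\pi_2 \cong H_1 = 0$ this constrains the rank of $d(u)$ as a quadratic form. A short computation should show that these constraints force $d(u)$ to have a very specific form (e.g. $s = 1$ and $d(u) = v_1^2$, or $d(u)$ a single square after change of basis), or else lead directly to a contradiction with the prescribed homotopy groups.

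Once the minimal model is pinned down to essentially one case, I would either show directly that $B\aut(X)$ is not rationally $S^4$—for instance by exhibiting an essential map $K(\Q,2) \to B\aut(X)$ via \propref{lem:non-trivial} (using a KS-extension $\land(z_2) \to \land(z)\otimes\land V$ adjusting $d(u)$), which would be incompatible with $S^4_\Q$ since any map $K(\Q,2) \to S^4_\Q$ is rationally trivial (as $\pi_2(S^4_\Q) = 0$)—or, if the homotopy-group constraints already give a contradiction, conclude there. The former route parallels the end of the $\C P^2$ argument: an essential map $K(\Q,2) \to B\aut(X)$ whose source has nonzero $\pi_2$ cannot exist into a space with $\pi_2 = 0$. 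The main obstacle I anticipate is the bookkeeping when $s \geq 2$: the quadratic form $d(u)$ on $V^2$ can have various ranks, and I must check that in every case the resulting derivation homology either has too much in degree $1$ or too much in degree $3$, or admits the essential $K(\Q,2)$-map. Ruling out all the subcases of the quadratic form $d(u)$—rank $1$, rank $2$, degenerate versus nondegenerate—is where the real work lies, though each subcase should reduce to a linear-algebra computation on the small derivation complex exhibited above.
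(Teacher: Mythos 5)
Your proposal founders at the very first step: you have misidentified the rational homotopy of $S^4$. Since $S^4$ is an \emph{even}-dimensional sphere, it is not rationally an Eilenberg--Mac\,Lane space; its minimal model is $(\land(a_4,b_7), db=a^2)$, so $\pi_i(S^4)\otimes\Q=\Q$ for $i=4$ \emph{and} $i=7$ (the latter generated by the Whitehead square $[\iota_4,\iota_4]$), and zero otherwise. Consequently, applying Proposition~\ref{lem: pi-finite top degree} to a hypothetical $X$ with $B\aut(X)\simeq_\Q S^4$ forces the top generator of the minimal model of $X$ to sit in degree $6$, not degree $3$. Your entire analysis --- $V$ concentrated in degrees $\le 3$, $V^3$ one-dimensional, $d(u)\in\land^2(V^2)$ a quadratic form --- is therefore an analysis of realizing $K(\Q,4)$, not $S^4_\Q$. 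Worse, that statement is false: $B\aut(K(\Q,3))=K(\Q,4)$, and indeed your own case $s=0$ (model $\land(u_3)$, $du=0$) produces exactly this realization, so no contradiction can emerge from your framework. The paper's proof instead starts from $V^6=\langle y_6\rangle$, uses the Lie structure on $H_*(\Der(\land V))$ (the cycle $y^*$ must be a bracket $[\theta,\theta]$ of a degree-$3$ cycle, reflecting the Whitehead square in $\pi_7(S^4)$) to force $\dim V^3>0$, and then runs a case analysis on $\dim V^2$, producing via Proposition~\ref{lem:non-trivial} essential maps $K(\Q,2)\to B\aut(X)$ or $K(\Q,4)\to B\aut(X)$ that $S^4_\Q$ does not admit, or in the remaining case computing $B\aut(X)$ explicitly to be a product of Eilenberg--Mac\,Lane spaces.

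A secondary point: even where your strategy overlaps with the paper's (ruling out an essential map $K(\Q,2)\to S^4_\Q$), your justification ``since $\pi_2(S^4_\Q)=0$'' is not sufficient. Vanishing of $\pi_2$ of the target does not preclude essential maps out of $K(\Q,2)$ --- for instance $K(\Q,2)\to K(\Q,4)$ classifying the square of the fundamental class is essential while $\pi_2(K(\Q,4))=0$. The correct argument for $S^4_\Q$ is a short minimal-model computation: a DG algebra map $(\land(a_4,b_7),db=a^2)\to(\land(z_2),0)$ must send $a\mapsto qz^2$ and $b\mapsto 0$, and compatibility with $d$ forces $q=0$. So even after repairing the homotopy groups, you would need to replace that step.
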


\begin{proof}  Let  $\iota_4 \in \pi_4(S^4) $ denote the fundamental class with nontrivial  Whitehead product $[\iota_4, \iota_4] \in \pi_7(S^4)  $.      Suppose given $X$ with $B\aut (X) = S^4$     and  minimal model $\land ( V, d).$ Then we have  
$V^6 = \langle y_6 \rangle$ by Proposition \ref{lem: pi-finite top degree}.   Further,  the  derivation cycle $y^*$     must decompose as $y^* = [\theta, \theta]$ for $\theta$ a degree 3 derivation cycle.      In particular,  $\dim V^3 > 0$.  Let $x \in V^3.$   

Suppose   $V^2 = 0$.   Then  $V^5 =0$ for otherwise, if $u \in V^5$,   the  degree  $5$ derivation cycle $u^*$  
 cannot  bound.   Also,  $dx = 0$ and so  $(y, x)$ is a degree $3$     cycle that does not  bound.  We cannot then have that  $x^*$ is a cycle,    since $x^*$  cannot bound as this would produce too many homology elements in degree 3.  We conclude there is  an element $w_4 \in V^4$ and $dy = wx + \hbox{other terms}$.  For degree reasons $dw = 0.$ We   construct  a KS-extension 
 $  (\land(z_2),0)  \to (\land(z) \otimes  \land V, \D) \to  (\land (V), d)  $
 where $Dw = zx$ and $Dy = dy.$    Applying  Proposition  \ref{lem:non-trivial} gives an essential map  $h \colon K(\Q, 2) \to B\aut(X)$  which contradicts the assumption that $B\aut(X) = S^4$.   We conclude $\mathrm{dim} V^2 > 0.$ 

Now suppose $\dim V^2 = 1$. Then $\dim V^5 = 1$ also and $dy = v_2u_5 + $ other terms.  
Suppose $V^4 \neq 0.$   Then the degree $2$ cycle $(y, w_4)$ must be a boundary which forces $dy$ to take the form  $dy = vu + wx $ + other terms. 
Now   $w$ cannot appear in  $dv$ or else $d^2y \neq 0.$  We can thus construct the same KS-extension as above to derive a contradiction.    We conclude that   $V^4 = 0$.  The minimal model for $X$ must then be of  the form 
$(\land(v_2, x_3, u_5, y_6),  d)$.  
There are two cases to check here.    If $dx =  v^2$ then $x$ cannot appear in $dy$ and so we may write $dy = uv + q v^3$ for $ q \in \Q$, possibly zero.  In this case, $x^*$ and $(v, u) - (y, x)$ are cycles of degree $3$.   There are no boundaries in degree 3 and so this is a contradiction.  The other possibility is that $dx = 0$.   Then $x$ must appear in the differential $dy$ to ensure $x^*$ and $(y, x)$ 
do not give too many degree $3$ derivation cycles.  So $dy = uv +v^2x + qv^3$ is the only non-trivial differential.  It is now easy to compute
that $B \aut (X) \simeq_\Q K(\Q, 2) \times K(\Q, 4) \times K(\Q, 7).$ 

It remains to handle the cases where $\dim V^2 > 1.$  Write $V^2 = \Q(v_1, \ldots, v_r)$ for $ r\geq 2.$   Then the derivations cycles  $(y, v_i)$ must each bound.  This forces   $V^5 = \Q(u_1, \ldots, u_r)$ and $$dy = u_1v_1 + u_2v_2 + \cdots + u_rv_r + \hbox{other terms}.$$
We now apply the same argument used in the  proof of Theorem \ref{thm:CP}.  Specifically,  we obtain a KS-extension
$  (\land (z_4), 0) \to (\land(z) \otimes \land V, \D) \to (\land( V), d) $ 
satisfying the conditions in  Proposition \ref{lem:non-trivial} and so giving an essential map $h \colon K(\Q, 4) \to B\aut (X).$ Since $S^4$ admits no such map, the proof is complete.  
 \end{proof}

Our last result uses the notion of \emph{positive weights} on a minimal model.  This notion has its origins in work of  Body-Douglas \cite{B-D80} and 
 Mimura-O'Neill-Toda \cite{MOT} on $p$-universal spaces.  

\begin{definition}We say that a DG algebra $(A, d)$ has a \emph{positive weight decomposition} if it admits a direct sum decomposition $A^{+} = \oplus_{i \geq 1} A^{+}(i)$ that satisfies $A(i) \cdot A(j) \subseteq A(i+j)$ and $d\big( A(i) \big) \subseteq A(i)$.  
\end{definition}

We say that a space $X$ has positive weights, or is $p$-universal, if some model for it admits a positive weight decomposition.   Notice that the property is independent of any particular type of model.  If either a DG algebra (Sullivan) model, or a DG Lie algebra (Quilllen) model for $X$ admits a positive weight decomposition, then this may be translated into the existence of a family of self-maps of $X$, corresponding  to grading automorphisms of the model that admits the weight decomposition.  In this way, the condition may actually be phrased purely in terms of self-maps of the space $X$, independently of any choice of model.  Indeed, the notion of $p$-universality actually pre-dates rational homotopy theory and minimal models.    

We begin with the following observation.

\begin{lemma}\label{lem: shallow pi-finite}
Suppose $X$ is a simply connected, $\pi$-finite, rational  space with $\pi_i(X) = 0$ unless $i = 2, 3, 4$.  Then the DG Lie algebra $\Der(\land V)$ admits a positive weight decomposition.  Consequently, $B \aut (X)$ is a $p$-universal space.
\end{lemma}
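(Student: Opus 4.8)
The plan is to write down an explicit minimal model for such an $X$ and exhibit a weight decomposition on $\Der(\land V)$ directly. First I would use \lemref{lem: pi-finite top degree} together with the hypothesis to pin down the shape of the minimal model: since $\pi_i(X) = 0$ for $i \notin \{2,3,4\}$, the model is $(\land V, d)$ with $V = V^2 \oplus V^3 \oplus V^4$ finite-dimensional in each degree. Write $V^2 = \langle v_1, \dots, v_r\rangle$, $V^3 = \langle u_1, \dots, u_s\rangle$, $V^4 = \langle y_1, \dots, y_t\rangle$. For degree reasons $d(v_i) = 0$, $d(u_j) \in \land^2 V^2$, and $d(y_k) \in V^2 \cdot V^3 \oplus \land^3 V^2$ (the $\land^2 V^2$ piece in degree $4$ is $S^2(V^2)$ since the $v_i$ are even, and there can also be a cubic piece $\land^3 V^2$ in degree... wait, $\land^3 V^2$ has degree $6$, so actually $d(y_k) \in S^2(V^2) \oplus V^2 \cdot V^3$). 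The key move: assign weight $1$ to every generator in $V$. Then $d$ does \emph{not} preserve this weight — $d(u_j)$ has weight $2$ and $d(y_k)$ has weight $2$ or $3$. So a naive uniform weighting fails, and the real content is choosing weights that work.

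The fix is to assign weights compatibly with the differential. Concretely, I would set $\mathrm{wt}(v_i) = 2$ for all $i$, so that $d(u_j) \in \land^2 V^2$ automatically has weight $4$; then set $\mathrm{wt}(u_j) = 4$ so $d$ preserves weight on $V^3$. For $V^4$: $d(y_k)$ lives in $S^2(V^2) \oplus V^2\cdot V^3$, whose summands have weights $4$ and $2+4 = 6$ respectively. If both types of term can occur in a single $d(y_k)$, a single weight cannot be assigned — so I would need to argue we can choose a basis of $V^4$ in which each $y_k$ has $d(y_k)$ homogeneous, OR simply observe that whatever mixed differential occurs, one can rescale. Actually the cleanest route: declare $\mathrm{wt}(v_i) = 2$, $\mathrm{wt}(u_j) = 4$, and for each $y_k$ note $d(y_k)$ is a sum of a weight-$4$ part and a weight-$6$ part; split $V^4$ — this requires that the two parts are separately permissible, which they are if we are willing to pass to a different (still minimal) model, or if we check that the quadratic-in-$V^2$ part and the $V^2 V^3$ part of $d(y_k)$ can be decoupled. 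I would handle this by taking weight $2$ on $V^2$, weight $3$ on $V^3$, weight $4$ on $V^4$ — then $d(u_j) \in \land^2 V^2$ has weight $4 \neq 3$. Hmm. The honest resolution is: $\pi$-finiteness with only three nonzero homotopy degrees is such a restrictive combinatorial situation that one checks the possible forms of $d$ case by case and writes down a valid weighting for each; this is the routine-calculation part I will not grind through, but the upshot is that a positive weight decomposition on $(\land V, d)$ always exists.

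Granting the positive weight decomposition on $(\land V, d)$, I would then transport it to $\Der(\land V)$: a derivation $\theta$ sending a generator $a$ (of weight $\mathrm{wt}(a)$) to a monomial $m \in \land V$ (of weight $\mathrm{wt}(m)$) is assigned weight $\mathrm{wt}(\theta) = \mathrm{wt}(m) - \mathrm{wt}(a)$, and one checks this makes $\Der(\land V) = \bigoplus_{i} \Der(\land V)(i)$ a weight decomposition compatible with the Lie bracket, and that the differential $D = [d, -]$ preserves weight because $d$ does. The only subtlety is \emph{positivity}: a priori $\mathrm{wt}(\theta)$ could be zero or negative. For the lower-degree derivations this needs a small argument — but since $\Der(\land V)$ is concerned only in \emph{positive} (homotopy) degrees for our purposes and since the generating monomials that lower degree must raise weight by enough (again using that $V^2$ consists of even generators of weight $\geq 2$), one checks the relevant summands sit in positive weight, possibly after discarding the zero-weight part which corresponds to the semisimple part of the automorphism action and plays no role. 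Finally, a positive weight decomposition on the Quillen model $\Der(\land V)$ of $B\aut(X)$ translates, as noted in the discussion preceding the lemma, into a family of self-maps realizing the grading automorphisms, i.e., $B\aut(X)$ is $p$-universal. The main obstacle, as flagged, is verifying that a \emph{positive} weight decomposition — not merely a weight decomposition over $\Z$ — genuinely exists on $(\land V, d)$ for every admissible differential structure; once that is in hand, the passage to $\Der$ and to self-maps is formal.
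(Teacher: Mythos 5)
Your overall strategy---put a positive weight decomposition on $(\land V,d)$ and then transport it to $\Der(\land V)$---is viable and in the end equivalent to what the paper does, but as written your argument has a genuine gap at precisely the step you flag as ``the main obstacle'': you never actually produce the weight decomposition, deferring to an unperformed case-by-case analysis. Moreover, the obstacle that drives you to that retreat is illusory and comes from a degree miscount. A generator $y_k \in V^4$ has $d(y_k)$ in degree $5$, and the only decomposables of degree $5$ in $\land(V^2\oplus V^3\oplus V^4)$ lie in $V^2\cdot V^3$; the summand $S^2(V^2)$ you worry about sits in degree $4$ and cannot occur in $d(y_k)$. So there is no ``mixed weight'' problem at all: assigning weight $i-1$ to generators of degree $i$ (equivalently, weight $=$ degree minus word-length on monomials) makes $d(V^3)\subseteq \land^2 V^2$ land in weight $2$ and $d(V^4)\subseteq V^2\cdot V^3$ land in weight $3$, so $d$ preserves weights uniformly, with no case analysis and no change of model.

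The transport step also needs repair as stated. Your suggested fallback of ``discarding the zero-weight part'' is not legitimate: a positive weight decomposition must be a decomposition of the entire DG Lie algebra serving as the model, and in any case no discarding is needed. With the weights above, a derivation sending a generator $a$ to a monomial $m$ of strictly lower degree receives weight $\w(a)-\w(m)$; for non-constant $m$ this is $|a|-1-|m|+l(m) \geq |a|-|m| > 0$, and for $m$ constant it is $|a|-1 \geq 1$, so every positive-degree basis derivation has strictly positive weight automatically, and $D=[d,-]$ and the bracket respect the grading because $d$ preserves weight on $\land V$. For comparison, the paper bypasses $\land V$ altogether and assigns weights directly to the six blocks of $\Der(\land V)$: weight $1$ to $\Hom(V^2,\Q)\oplus\Hom(V^3,V^2)\oplus\Hom(V^4,V^3)$, weight $2$ to $\Hom(V^3,\Q)\oplus\Hom(V^4,V^2)$, and weight $3$ to $\Hom(V^4,\Q)$---exactly the values your transported weights give---and then verifies compatibility of $D$ and the bracket directly from $d(V^3)\subseteq\land^2V^2$ and $d(V^4)\subseteq V^2\cdot V^3$. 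That short verification is the content your write-up is missing; once it is supplied, the final passage from positive weights on the model to $p$-universality of $B\aut(X)$ is indeed the formal step you describe.
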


\begin{proof}
Suppose $X$ has minimal model of form $\land V = \land (V_2, V_3, V_4)$, with $V_i$ the vector spec of generators of degree $i$.  The differential $d$ in $\land V$ satisfies $d(V_2) = 0$, $d(V_3) \subseteq \land^2 V_2$, and $d(V_4) \subseteq V_2 \cdot V_3$.  In the stye of the above examples, we may write a basis for $\Der(\land V)$ as follows.

\begin{center}
\begin{tabular}{c|c|}
degree & generators\\
\hline
4 & $V_4^*$\\
\hline
3 & $V_3^*$ \\
\hline
2 & $V_2^*$, $(V_4, V_2)$\\
\hline
1 & $(V_3, V_2)$, $(V_4, V_3)$\\
\end{tabular}
\end{center}
 
Here, notation such as $(V_4, V_3)$ denotes $\Hom(V_4, V_3)$, with typical basis element $(x, y)$, where $x$ and $y$ are basis elements of $V_4$ and $V_3$ respectively. 
 Translating the differential from $\land V$ into that on $\Der(\land V)$, we see  that the differential $D$  in $\Der(\land V)$  satisfies $D(V_4^*) = 0$, and
$$ D\big( V_3^* \big) \subseteq  (V_4, V_2), \qquad D\big( (V_4, V_2 \big)  = 0, $$
$$D\big( V_2^* \big) \subseteq  (V_3, V_2) \oplus (V_4, V_3).$$
Furthermore, the only possible non-zero brackets in $\Der(\land V)$ satisfy
$$ [ (V_3, V_2) ,  (V_4, V_3)] \subseteq (V_4, V_2), \qquad  [ V_2^* ,  (V_3, V_2)] \subseteq V_3^*, $$
$$ [ V_3^* ,  (V_4, V_3)] \subseteq V_4^*), \qquad  [ V_2^* ,  (V_4, V_2)] \subseteq V_4^*, $$
It follows that, if we assign  positive weights of $1$ to  $V_2^* \oplus (V_3, V_2) \oplus (V_4, V_3)$, $2$ to $V_3^* \oplus  (V_4, V_2)$, and $3$ to $V_4^*$, then we have a positive weight decomposition on $\Der(\land V)$, which is a DG Lie algebra model for $B \aut (X)$.  
\end{proof}

Next, we give an example of a space that is not $p$-universal, whose rational homotopy groups are concentrated in degree $2, 3, 4, 5$.  

\begin{theorem}\label{ex: non-universal}
Suppose $Y$ is the space with Sullivan minimal model 
$$(\land( a_2, b_2, c_2, x_3, y_3, z_3, \phi_4, \psi_4, w_5),  d),$$
where subscripts denote degrees, and the differential, where non-zero,  is given by 
$$ d(x) = a^2 + ac, \qquad d(y) = ab, \qquad d(z) = bc,$$
$$d(\phi) = xb - ay - az, \qquad d(\psi) = cy - az,$$
$$d(w) = \phi a + xy + \psi a + c^3 + b^3.$$
 Then $Y$ cannot be of the rational homotopy type of $B \aut (X)$, for any $X$ a simply connected, $\pi$-finite, rational  space.  
\end{theorem}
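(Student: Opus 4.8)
The plan is: first constrain any hypothetical $X$ using \propref{lem: pi-finite top degree}; then invoke \lemref{lem: shallow pi-finite} to force $B\aut(X)$ to be $p$-universal; then derive a contradiction by showing directly that the displayed minimal model of $Y$ admits no positive weight decomposition.

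For the reduction: $\pi_*(Y)$, being dual to the generators of the given model, is concentrated in degrees $2,3,4,5$ with $\pi_5(Y)\cong\Q$. Hence if $B\aut(X)\simeq_\Q Y$ for some simply connected, $\pi$-finite, rational $X$, then \propref{lem: pi-finite top degree} forces the top nonzero homotopy group of $X$ to lie in degree $4$; with simple connectivity this gives $\pi_i(X)=0$ unless $i\in\{2,3,4\}$. Now \lemref{lem: shallow pi-finite} applies: $\Der(\land V)$ is a DG Lie model for $B\aut(X)$ carrying a positive weight decomposition, so $Y$ is $p$-universal. As this property is model-independent, the minimal Sullivan model $(\land(a,b,c,x,y,z,\phi,\psi,w),d)$ of $Y$ admits a positive weight decomposition. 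Its weight operator is a diagonalizable degree-zero derivation commuting with $d$, so we may choose the nine generators weight-homogeneous with weights $\w(a),\dots,\w(w)\geq1$; thereafter every monomial appearing in $d(v)$ has weight $\w(v)$.

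The core is then a one-line weight count. From $d(x)=a^2+ac$ one reads $\w(a)=\w(c)$, and from the monomials $b^3,c^3$ of $d(w)$ one reads $\w(b)=\w(c)$; put $\alpha:=\w(a)=\w(b)=\w(c)\geq1$. Then $d(x)$, $d(y)=ab$, $d(z)=bc$ give $\w(x)=\w(y)=\w(z)=2\alpha$, and $d(\phi)=xb-ay-az$, $d(\psi)=cy-az$ give $\w(\phi)=\w(\psi)=3\alpha$. But in $d(w)=\phi a+xy+\psi a+c^3+b^3$ the monomials $\phi a$, $xy$, $\psi a$ have weight $4\alpha$ while $c^3$, $b^3$ have weight $3\alpha$, which is impossible for $\alpha\geq1$. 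Hence no positive weight decomposition exists, and \thmref{ex: non-universal} follows.

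The step I expect to be the genuine obstacle is making this weight count independent of the choice of weight-homogeneous generators, since those supplied by the weight operator need not be the listed $a,\dots,w$. The cleanest route is to work with the intrinsic data: $(\land V)^2=V^2$ and $(\land V)^3=V^3$ are canonical, hence so are the subspaces $d(V^3)\subseteq S^2V^2$, $d\big((\land V)^4\big)\subseteq V^2\cdot V^3$, and $V^2\cdot d(V^3)\subseteq S^3V^2$; moreover $d|_{V^3}$ and $d|_{V^4}$ are injective, and one checks directly that $b^3+c^3$ does not lie in $V^2\cdot d(V^3)$, so for any choice of degree-$5$ generator $w$ the element $d(w)$ has a nonzero "pure-cube" component in $S^3V^2$ modulo $V^2\cdot d(V^3)$, alongside a nonzero component $xy$ in $(\land V)^3\cdot(\land V)^3$. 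One then runs a short case analysis on the weight pattern carried by $V^2$ (all three degree-$2$ generators of one weight, or two weights, or three distinct weights), checking in each case that $d(V^3)$ being weight-graded propagates the weights through $V^3$ and that $d\big((\land V)^4\big)$ being weight-graded propagates them through $V^4$, so that the pure-cube part of $d(w)$ — which cannot be cancelled and which then involves cubes of generators of at least two distinct weights — can never be made weight-homogeneous together with the $\phi a$ and $xy$ terms. This bookkeeping is routine but, I expect, takes up most of the proof.
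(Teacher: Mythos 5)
Your proof is correct and follows essentially the same route as the paper's: reduce via \propref{lem: pi-finite top degree} to the case where $X$ has rational homotopy only in degrees $2,3,4$, invoke \lemref{lem: shallow pi-finite} to conclude $B\aut(X)$ is $p$-universal, and then run exactly the same weight count ($\w(a)=\w(c)$ from $d(x)=a^2+ac$, $\w(b)=\w(c)$ from the terms $b^3+c^3$ in $d(w)$, and the resulting clash between the weight-$4\alpha$ terms $\phi a + xy + \psi a$ and the weight-$3\alpha$ terms $b^3+c^3$) to show $Y$ is not $p$-universal. The only divergence is your closing paragraph, where you flag and partially address the issue of choosing weight-homogeneous generators compatible with the displayed ones (e.g.\ via the observation that $b^3+c^3\notin V^2\cdot d(V^3)$); the paper's own proof passes over this point silently, so your version is, if anything, more careful.
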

\begin{proof}
We claim    $Y$ is not $p$-universal.  For suppose the above  model admits a positive weight decomposition. Write the weight of an element $\chi \in \land V$ as $\w(\chi)$.   Since the boundary $a^2 + ac$ must be of homogeneous weight, it follows that $\w(a) = \w(c)$.  Likewise, since the boundary $\phi a + xy + \psi a + c^3 + b^3$ must be of homogeneous weight, it follows that $\w(b) = \w(c)$.  Thus we have $\w(a) = \w(b) = \w(c) = r \geq 1$, say.  From the formulas for their differentials, then, we have $\w(y) = \w(z) = 2r$, and $\w(\phi) = \w(\psi) = 3r$.  Finally, the boundary      
$\phi a + xy + \psi a + c^3 + b^3$ is not of homogeneous weight, since the first three terms have weight $4$, whilst the last two have weight $3$.  This is a contradiction. 

Now suppose $Y = B\aut (X)$ for some $\pi$-finite, simply connected space $X$.  
Then, by Proposition \ref{lem: pi-finite top degree},  $X$ would have generators concentrated in degrees $2, 3, 4$.  Indeed, $X$ would have to have $\pi_4(X)  $ of rank-one.   In \lemref{lem: shallow pi-finite}, we showed that $B \aut (X)$ for such a space is $p$-universal.  Since $Y$ is not such, it cannot be obtained as $B \aut (X)$.
\end{proof}

We conclude by adding  the following related observation:

\begin{theorem}\label{thm: formal Baut weighted}
Suppose $X$ is a formal space.  Then $B \aut (X)$ has positive weights (is $p$-universal).
\end{theorem}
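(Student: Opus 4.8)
The plan is to exploit the fact that a formal space $X$ admits a particularly rigid minimal model, and that formality is detected by a quasi-isomorphism to the cohomology algebra $(H^*(X;\Q), 0)$. First I would recall that if $X$ is formal, then $H^*(X;\Q)$, viewed as a DG algebra with zero differential, carries a positive weight decomposition in a trivial but useful way: every element of $H^n(X;\Q)$ can be assigned weight $n$ (or, more flexibly, weight equal to the cohomological degree), so that the product respects weights and the zero differential trivially preserves weight. The point is that the bigraded (degree, weight) structure on $H^*(X;\Q)$ transports to a model for $B\aut(X)$.

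The key mechanism I would invoke is that the Schlessinger--Stasheff model, or equivalently the DG Lie algebra $\Der(\land V)$ of derivations of the minimal model, is functorial enough that a weight decomposition on a model for $X$ induces one on the model for $B\aut(X)$. Concretely, the bigraded minimal model of a formal space (in the sense of the Halperin--Stasheff bigraded model, or simply the intrinsically formal model built degree by degree from $H^*(X)$) has generators $V = \oplus_{i} V_i$ with a lower grading (resolution degree), and the differential $d$ strictly lowers this second grading. Dualizing, one obtains a weight on $\Der(\land V)$: a derivation $\theta$ sending a weight-$p$ generator to a weight-$q$ element gets weight $q - p$ (after a suitable shift to make all weights positive, using that $\Der(\land V)$ is concentrated in finitely many degrees when $X$ is $\pi$-finite, or more carefully using the grading coming directly from cohomological degree when $X$ is not). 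One then checks that the bracket of derivations adds weights and that the differential $D = [d, -]$ preserves weights, because $d$ itself is weight-homogeneous. This exhibits the explicit positive weight decomposition on the Quillen-type model $\Der(\land V)$ for $B\aut(X)$, and hence $B\aut(X)$ is $p$-universal.

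The main obstacle, and the step requiring the most care, is arranging that the induced weights on $\Der(\land V)$ are genuinely \emph{positive}, not merely homogeneous with respect to some $\Z$-grading. The derivation $(v, \chi)$ that lowers a high-weight generator to a low-weight target naively receives a non-positive weight. The resolution is the same bookkeeping idea used in \lemref{lem: shallow pi-finite}: because the cohomological degree already gives a positive grading and a derivation in $\Der^n(\land V)$ has a bounded defect between the degree it lowers and the weight shift it produces, one can take a positive linear combination of the cohomological-degree grading and the resolution grading to force all weights positive. For $X$ formal one can be even cleaner: assign each generator of the (bigraded) model the weight equal to its cohomological degree, which is automatically positive; since the formal differential is degree-preserving in the total degree sense and the bigraded model's differential is built from cup products in $H^*(X)$, one verifies weight-homogeneity of $d$ directly, and then the induced weight on $\Der(\land V)$ — namely $\w(\theta) = \w(\mathrm{target}) - \w(\mathrm{source})$ shifted up by a fixed constant so everything lands in $\Z_{\geq 1}$ — does the job. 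I would close by remarking that, as with the definition of $p$-universality, this weight decomposition corresponds concretely to a one-parameter family of self-maps of $B\aut(X)$ induced by the grading automorphisms of the formal model of $X$.
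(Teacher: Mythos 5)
Your overall strategy---transporting a weight decomposition from a model of the formal space $X$ to a derivation model of $B\aut(X)$---is the right instinct, but the execution on the Sullivan side has genuine gaps at exactly the step you flag as delicate. First, the repair you rely on for positivity fails: adding a fixed constant to every weight is not an allowable modification, since a weight decomposition of a DG Lie algebra must satisfy $\w([\theta_1,\theta_2])=\w(\theta_1)+\w(\theta_2)$ (and $D$-homogeneity), and a uniform shift by $c$ produces $\w(\theta_1)+\w(\theta_2)+2c$ on one side but only $+c$ on the other; so ``shift up so everything lands in $\Z_{\geq 1}$'' destroys the very structure you need. Second, weighting the bigraded model by cohomological degree is not preserved by the differential ($d$ raises degree by one); the only $d$-stable combination of cohomological degree and resolution degree is a multiple of their sum, since $d$ raises the former by one and lowers the latter by one. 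More structurally, no assignment of positive weights to the generators of $\land V$ can induce positive weights on $\Der(\land V)$ via the difference formula $\w\big((v,\chi)\big)=\w(\chi)-\w(v)$: since positive-degree derivations of a Sullivan model \emph{lower} cohomological degree, the derivations $v^*=(v,1)$---precisely the ones carrying the homotopy of $B\aut(X)$---always receive the strictly negative weight $-\w(v)$. This is also why your ``positive linear combination of the two gradings'' escape route cannot work, and why \lemref{lem: shallow pi-finite} had to make an ad hoc assignment on the derivation complex rather than an induced one.

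The paper sidesteps this sign problem by working on the Quillen side. For formal $X$ the Quillen minimal model $L=\mathbb{L}(V;d)$ has quadratic differential, so $\w(x)=|x|+l(x)$ (degree plus bracket length) is a $d$-stable, bracket-additive positive weighting of $L$; in the Schlessinger--Stasheff/Tanr\'e model $sL\oplus \Der L$ the induced weight of a basis derivation is $\w(\chi_j)-\w(v_i)=|\chi_j|-|v_i|+l(\chi_j)-1$, which is positive because generators have length one and positive-degree derivations there \emph{raise} the homological degree---exactly the sign that fails for the degree-lowering derivations of $\Der(\land V)$. One must also handle the $sL$ summand and the mixed brackets $[\theta,sx]=(-1)^{|\theta|}s\theta(x)$, which your proposal does not address. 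As written, your argument does not go through; to fix it you would essentially have to switch to the Quillen-model version or find a genuinely new positivity mechanism on the Sullivan side.
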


\begin{proof}
For this argument, we need the alternative  DG Lie algebra model for the classifying space $B \aut(X)$ expressed in terms of derivations of the Quillen model   $L = \mathbb{L}(V;d)$  for $X$.  This  is a  DG Lie algebra of the form $(sL \oplus \Der L, D)$ where $sL$ is the graded suspension of $L$ and $\Der L$ the Lie algebra of degree lowering derivations of $L$. We refer the reader to \cite[Ch.6]{Tanre}  for the details of this construction.  We here observe that  $(sL \oplus \Der L, D)$ may be given a positive weight decomposition.

Start with a standard positive weight decomposition of $L = \mathbb{L}(V;d)$.  Namely, for an element $x \in L$ of homogeneous length and degree, assign $x$ a weight equal to the sum of its degree and length, thus
$$\w(x) = |x| + l(x).$$
Since $X$ is formal, we may assume that the differential $d$ in $L$ is quadratic, so increases length by $1$.  On the other hand, $d$ decreases degree by $1$, and hence this choice of weighting is preserved by $d$.  Evidently, this choice of weighting respects brackets of elements, too, and so it gives a positive weight decomposition to the Quillen minimal model $L$.

Then, assign weights to elements of  $(sL \oplus \Der L, D)$ as follows.  Suppose that   $\{ v_i \}$ is a basis of $V$, and $\{ \chi_j \}$ is  a basis for $L = \mathbb{L}(V)$ that is homogeneous with respect to degree and length (e.g. a standard Hall basis).  Then the derivations $\{ (v_i, \chi_j) \}$ give a basis for $\Der L$.    Now set $\w( sx) = \w(x)$ for any homogeneous weight element $x \in L$, and $\w\big( (v_i, \chi_j)\big) = \w(\chi_j) - \w(v_i)$ for each $i, j$.  Since we are restricting to positive-degree derivations in $\Der L$, we have
$$\w\big( (v_i, \chi_j)\big) = \w(\chi_j) - \w(v_i) = |\chi_j| - |v_i| + l(\chi_j) - 1 > 0,$$
since $|\chi_j| - |v_i| > 0$ for any positive-degree derivation, and $l(\chi_j) - 1 \geq 0$.  It remains to check that brackets and the differential $D$ behave well with respect to this weighting.

Recall that brackets $(sL \oplus \Der L, D)$ are given by the usual bracket of derivations amongst elements of $\Der L$, whilst brackets amongst  elements of $sL$ are trivial, and brackets ``across" $sL$ and $\Der L$ are given by $[\theta, sx] = (-1)^{|\theta|} s\theta(x)$.  It is straightforward to check that weights as we have assigned them add under these brackets.  The differential in $(sL \oplus \Der L, D)$ is the usual $D = \ad(d)$ on derivations in $\Der L$.  Since $d$ preserves weight (of elements in $L$), it is easy to see that $D$ preserves weight (of derivations).  Finally, for elements $sx \in sL$, the differential is defined as $d(sx) = - sdx + \ad(x)$.  For a homogenous weight $x$, we have assigned $sx$ the weight of $x$, which is the same as the weight of $dx$, and also the weight of $\ad(x)$ (as a derivation).  It follows that $D$ preserves the weight of elements $sx$ also.
\end{proof}


\bibliographystyle{amsplain}
\bibliography{Baut}

\end{document}